\documentclass[12pt]{amsart}
\usepackage{amssymb,enumerate,mathrsfs, amsmath, amsthm}
\usepackage{url,titletoc,enumitem}
\usepackage{tikz-cd}

\DeclareMathOperator{\Gal}{Gal}

\linespread{1.18}
\textwidth = 6.5 in 
\textheight = 8.5 in 
\oddsidemargin = 0.0 in 
\evensidemargin = 0.0 in
\topmargin = 0.0 in
\headheight = 0.0 in
\headsep = 0.3 in
\parskip = 0.05 in
\parindent = 0.3 in

\newtheorem{theorem}{Theorem}[section]
\newtheorem{cor}[theorem]{Corollary}
\newtheorem{lemma}[theorem]{Lemma}

\newtheorem{prop}[theorem]{Proposition}
\newtheorem{definition}[theorem]{Definition}

\newtheorem{remark}[theorem]{Remark}

\newtheorem{conj}[theorem]{Conjecture}

\newtheorem{question}[theorem]{Question}

\newtheorem*{lemma*}{Lemma}
\newtheorem{Heuristic}[theorem]{Heuristic}

\numberwithin{equation}{section}

\def\bbR{ {\mathbb R}}

\def\bbF{ {\mathbb F}}
\def\bbQ{ {\mathbb Q}}
\def\bbZ{ {\mathbb Z}}

\def\cO{ {\mathcal O} }

\usepackage{biblatex}
\addbibresource{bib.bib}

\makeindex

\title{LINEAR RELATIONS AMONG GALOIS CONJUGATES OVER $\mathbb{F}_q(t)$}
\author{Will Hardt and John Yin}
\begin{document}

\maketitle 

\section{Abstract}
We classify the coefficients $(a_1,...,a_n) \in \mathbb{F}_q[t]^n$ that can appear in a linear relation $\sum_{i=1}^n a_i \gamma_i =0$ among Galois conjugates $\gamma_i \in \overline{\mathbb{F}_q(t)}$. We call such an $n$-tuple a \textit{Smyth tuple}. Our main theorem gives an affirmative answer to a function field analogue of a 1986 conjecture of Smyth \cite{Smy86} over $\mathbb{Q}$. Smyth showed that certain local conditions on the $a_i$ are necessary and conjectured that they are sufficient. Our main result is that the analogous conditions are necessary and sufficient over $\mathbb{F}_q(t)$, which we show using a combinatorial characterization of Smyth tuples from \cite{Smy86}. We also formulate a generalization of Smyth's Conjecture in an arbitrary number field that is not a straightforward generalization of the conjecture over $\mathbb{Q}$ due to a subtlety occurring at the archimedean places.

\section{Introduction}

The question of how prevalent linear relations among Galois conjugates are has been studied from multiple angles.  In \cite{Ber04} it is shown that for a Hilbertian field $K$ and all but finitely many nonnegative integers $n$, there exists an algebraic number $\alpha \in \overline{K}$ of degree $2^n n!$ whose conjugates span a vector space of dimension $n$. In these cases, the dimension of relations between conjugates is $2^n n!-n$, and so in this sense, linear relations among conjugates are plentiful.

In an older paper, Smyth showed \cite[Cor. 2]{Smy86} that for any $a_1, \dots, a_n \in \bbZ$ with $gcd(a_1, \dots, a_n)=1$, if there exist Galois conjugates $\gamma_1, \dots, \gamma_n \in \overline{\mathbb{Q}}$ such that $\sum_{i=1}^n a_i \gamma_i = 0$, then the $a_i$ necessarily satisfy the following two properties. \begin{enumerate}
    \item $|a_i| \leq \sum_{j \neq i} |a_j|$ for all $i$
    \item Every prime $p$ divides at most $n-2$ of the $a_i$.
\end{enumerate}

Smyth conjectured that the converse is true as well over $\mathbb{Q}$.

\begin{conj} [Smyth's Conjecture] \label{Smyth-Conj} \cite{Smy86} If $(a_1, \dots, a_n) \in \mathbb{Q}^n$ satisfy (1) and (2), then there exist Galois conjugates $\gamma_1, \dots, \gamma_n$ so that $\sum_{i=1}^n a_i \gamma_i = 0$.
\end{conj}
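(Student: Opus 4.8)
The plan is to separate the archimedean constraint (condition~(1)) from the non-archimedean ones (condition~(2)), realize a short list of building-block tuples explicitly, and patch --- exactly the strategy that succeeds over $\bbF_q(t)$ --- while confronting head-on the one place where the function-field analogy breaks.

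First normalize: scaling by a rational and dividing out $\gcd(a_1,\dots,a_n)$ preserves (1), (2) and the conclusion, so assume $(a_1,\dots,a_n)\in\bbZ^n$ with $\gcd(a_1,\dots,a_n)=1$. Condition~(1), stated with absolute values, is exactly the assertion that there exist unit vectors $u_1,\dots,u_n\in\bbC$ with $\sum_i a_i u_i=0$: a closed planar polygon with side-lengths $|a_i|$, the signs absorbed into the $u_i$. Condition~(2) is the $p$-adic shadow of such a relation: for a realization $\sum a_i\gamma_i=0$ the valuations of the $\gamma_i$ are governed by the Newton polygon of the minimal polynomial of the common number $\alpha$, and (2) is exactly what is needed for a $\Gal$-stable, compatible assignment of $p$-adic valuations to exist at each prime. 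Thus the conjecture asks that the analytic polygon $(u_i)$ be upgradable to one whose vertices $\gamma_i$ form a single Galois orbit of size $n$, subject to the $p$-adic constraints everywhere; the orbit condition is cleanly phrased module-theoretically (fix $G=\Gal(L/\bbQ)$ and $H\le G$ of index $n$ and seek a $G$-equivariant map $\bbQ[G/H]\to L$ killing $\sum a_i\,(g_iH)$, then use Hilbert irreducibility to descend $G$-sets to honest Galois orbits).

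Next I would use the combinatorial characterization of Smyth tuples from \cite{Smy86} to reduce the sufficiency of (1) and (2) to realizing a manageable family of \emph{elementary} tuples --- supported on few coordinates with entries in $\{\pm1\}$, each realizing a transparent relation (trace-zero sums $\sum\gamma_i=0$ in degree $m$, relations among the $m$-th roots of a rational, vanishing sums of roots of unity) --- together with a \emph{gluing lemma}: if $\sum a_i'\gamma_i'=0$ and $\sum a_i''\gamma_i''=0$ are each realized by a single Galois orbit, then so is $\sum(a_i'+a_i'')\gamma_i=0$. Each elementary tuple is realized explicitly, and at every finite prime the freedom in the gluing is precisely the Newton-polygon freedom above, so the two local pictures can be spliced by weak approximation and Krasner's lemma, with condition~(2) guaranteeing the relevant local sets are nonempty. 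Away from $\infty$, then, gluing is a patching problem of the same character as the function-field theorem of this paper.

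The essential obstacle is the archimedean place. Over $\bbF_q(t)$ the place at infinity is non-archimedean, condition~(1) becomes just another Newton-polygon condition, and the whole argument collapses to patching; over $\bbQ$, condition~(1) is a rigid constraint on the \emph{actual complex values} of the $\gamma_i$, and there is no ``approximation at $\infty$''. The gluing step would have to move each vertex $\gamma_i\rightsquigarrow\gamma_i+\varepsilon\delta_i$ inside $\bbC$ while simultaneously keeping $\{\gamma_i\}$ a single Galois orbit of size exactly $n$, keeping $\sum a_i\gamma_i=0$ on the nose, and preserving~(2) at every prime, and doing all three at once is precisely what is not known. A concrete line of attack would be to take the polygon angles to be rational multiples of $2\pi$ so the $u_i$ lie in a cyclotomic field, seek $\gamma_i$ of the form (root of unity) times an algebraic number of absolute value near $1$, and run an implicit-function / Hilbert-irreducibility argument in the space of monic integer polynomials to push an approximate relation to an exact one. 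I expect this last step --- hitting the exact linear relation while forcing transitivity on exactly $n$ points and respecting all the $p$-adic conditions --- to be where the real difficulty, and the current impasse, lies, which is why this paper establishes the function-field analogue rather than Conjecture~\ref{Smyth-Conj} itself.
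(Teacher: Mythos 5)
The statement you were asked to prove is Conjecture~\ref{Smyth-Conj}, which the paper does not prove: it is an open conjecture (the paper explicitly says so and only establishes the $\bbF_q(t)$ analogue, Theorem~\ref{smyth-conj-Fqt}). Your proposal is candid about this, but as a proof it has concrete gaps beyond the acknowledged archimedean difficulty. The central unproven step is your ``gluing lemma'': the assertion that if $(a_1',\dots,a_n')$ and $(a_1'',\dots,a_n'')$ are each Smyth tuples then so is their coordinatewise sum. Nothing in Smyth's combinatorial characterization (Proposition~\ref{equivalent-conditions}) supports this; being a Smyth tuple is a property of the tuple as a whole (existence of permutation matrices $X_i$ with $\det(\sum a_iX_i)=0$), and there is no operation on balanced multisets or on permutation matrices that adds coefficients. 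Indeed the class of tuples satisfying the absolute value criteria is not even closed under addition (sums can acquire zero coordinates or violate the local conditions), so the reduction to ``elementary tuples with entries in $\{\pm1\}$'' cannot work as stated. Likewise, the claim that condition~(2) is ``exactly what is needed'' for a compatible Galois-stable assignment of $p$-adic valuations is only the necessity direction (Lemma~\ref{balanced-implies-criteria}); turning local valuation data at each prime into a single global Galois orbit is precisely the content of the conjecture, not a consequence of weak approximation and Krasner's lemma.

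The second gap you identify yourself: at the archimedean place there is no analogue of the counting/patching argument (Proposition~\ref{prop:SmythTupleNumberOfSolutions}) that drives the function-field proof, and your proposed implicit-function/Hilbert-irreducibility step for forcing an exact relation among complex conjugates of prescribed moduli is a research program, not an argument. So the proposal should be read as a (reasonable) heuristic discussion of why the conjecture is hard, not as a proof; no proof of this statement exists in the paper or, to date, in the literature.
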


A natural way to generalize (1) and (2) to an arbitrary field $K$ is as follows.
\begin{itemize}
     \item[(1')] For any archimedean absolute value $| \cdot |$ of $K$, we have $|a_i| \leq \sum_{j \neq i} |a_j|$ for all $i$.
    \item[(2')] For any nonarchimedean absolute value $| \cdot |$ of $K$, we have $|a_i| \leq \max_{j \neq i} |a_j|$ for all $i$.
\end{itemize}
We call (1') and (2') the \textit{absolute value criteria over $K$}.


Note that for any $a \in K^*$, a tuple $(a_1,...,a_n) \in K^n$ satisfies the absolute value criteria if and only if $(a a_1,...,a a_n)$ does, and similarly, $(a_1,...,a_n)$ is a Smyth tuple if and only if $(aa_1,...,aa_n)$ is a Smyth tuple. Hence, when $K$ is the field of fractions of a principal ideal domain, it is enough to look at coprime tuples $(a_1,...,a_n)$.

Conjecture \ref{Smyth-Conj} remains open. Our main result is Theorem \ref{smyth-conj-Fqt}, which answers in the affirmative the natural analogue of Smyth's question over $\bbF_q(t)$. We will also formulate in Conjecture \ref{Smyth-Conj-Num-Field} a generalization of Conjecture \ref{Smyth-Conj} for arbitrary number fields, which involves a subtlety not present in the cases of $\mathbb{Q}$ or $\mathbb{F}_q(t)$. A MathOverflow post of David Speyer \cite{MO264035} proves a special case of this conjecture.




In Section \ref{section-prelims} we record some terminology that we will use throughout the paper. In Section \ref{section-Fqt} we set up Theorem \ref{smyth-conj-Fqt}, which we go on to prove in Section \ref{proof section}. Section \ref{deg-of-gal-conj} examines how close the Galois relations constructed in our proof of Theorem \ref{smyth-conj-Fqt} are to ``as small as possible.'' In Section \ref{gal-grp} we show that the conjugates in our constructed linear relations for Smyth triples can be chosen to have the full symmetric group as their Galois group and in Section \ref{section -- heuristic} we give heuristic reasoning for why we should expect the Galois group of linear Galois relations to be large in general. In Section \ref{smyth-over-Q}, we record some results from Section \ref{deg-of-gal-conj} that also apply over $\mathbb{Q}$. Finally, in Section \ref{num-fields}, we formulate a number field generalization of Conjecture \ref{Smyth-Conj}, and then reduce it slightly to Conjecture \ref{reduced-conj}.

\section{Preliminaries} \label{section-prelims}
We lay out some terminology and a background result.

We say that $\gamma_1,...,\gamma_n$ are \textit{Galois conjugates over K} if they are all roots of the same irreducible polynomial over $K$. (When the base field $K$ is clear from the context, we will often omit it from our terminology.) Throughout the paper, we assume Galois conjugates are nonzero.

A \textit{linear Galois relation} will mean a linear relation $\sum_{i=1}^n a_i \gamma_i = 0$ among Galois conjugates $\gamma_1,...,\gamma_n \in \overline{K}$ with coefficients $a_i \in K$.

We will call a tuple $(a_1,...,a_n) \in \mathbb{F}_q[t]^n$ \textit{coprime} if $a_1,...,a_n$ generate the unit ideal (and similarly for $(a_1,...,a_n) \in \mathbb{Z}^n$). The (logarithmic) \textit{height} of a coprime tuple $(a_1,...,a_n) \in \mathbb{F}_q[t]^n$ is $\max_i \deg(a_i)$. The (logarithmic) \textit{height} of a coprime tuple $(a_1,...,a_n) \in \mathbb{Z}^n$ is $\max_i \log|a_i|$. A coprime tuple is said to be a \textit{Smyth tuple} if its coordinates appear as the coefficients of a linear Galois relation.

A \textit{balanced multiset of tuples} -- which we will sometimes shorten to ``\textit{balanced multiset}'' -- with respect to a tuple $(a_1,...,a_n)$ is a nonempty collection of nonzero solutions $\{(x_{i_1},...,x_{i_n}) \in K^n\}_{i=1}^N$ to the equation $\sum_{i=1}^n a_i x_i = 0$ such that the multiset $\{x_{i_j}\}_{i=1}^N$ is independent of $j$; in other words, for all $x \in K$, $x$ appears in each of the $N$ coordinate positions the same number of times.

A balanced multiset $\{(x_{i_1},...,x_{i_n}) \in K^n\}_{i=1}^N$ is a \textit{1-factor} if the multiset $\{x_{i_1}\}_{i=1}^N$ is in fact a set. (To see the justification for this terminology, consider the hypergraph whose vertices are elements of $K$ and whose edges are ordered tuples $(x_1,...,x_n)$ of solutions to $\sum_{i=1}^n a_i x_i = 0$. Then a 1-factor in this hypergraph is precisely our definition of 1-factor.)

The \textit{degree of a linear Galois relation} $\sum_{i=1}^n a_i \gamma_i = 0$ will refer to the degree $[K(\gamma_1): K]$ of the conjugates in the linear relation. Similarly the \textit{Galois group of a linear Galois relation} $\sum_{i=1}^n a_i \gamma_i = 0$ will mean $\Gal(E/K)$, where $E$ is the Galois closure of $K(\gamma_1)$.

Following standard terminology, we will call a Galois extension $L/K$ a $G$-extension if $\Gal(L/K) \cong G$. 

Our final preliminary is the fact that over any Hilbertian field, there exist $S_d$-extensions for all $d \geq 1$. This is a special case of \cite[Prop. 8]{Ber04}. We will apply this fact to the Hilbertian field $\mathbb{F}_q(t)$.



\section{Smyth's Conjecture over $\bbF_q(t)$} \label{section-Fqt}

Although our primary focus in this section (and this paper) is the function field $\mathbb{F}_q(t)$, we will often state definitions and theorems over an arbitrarily field $K$ when it is not substantially more complicated to do so.


Our approach in this paper benefits from having several equivalent notions of Smyth tuples. Most importantly, Smyth proved a combinatorial characterization of Smyth tuples involving balanced multisets. We record this, along with one other characterization of Smyth tuples, in Proposition \ref{equivalent-conditions}.

\begin{prop} \label{equivalent-conditions}
Let $K$ be any field such that there exist $S_d$-extensions of $K$ for all $d \geq 1$. The following are equivalent for $(a_1,...,a_n) \in K^n$.
\begin{enumerate}
    \item $(a_1,...,a_n)$ is a Smyth tuple
    \item There exists a balanced multiset of tuples with respect to $(a_1,...,a_n)$
    \item There exist permutation matrices $X_1,...,X_n$ such that $\det(\sum_{i=1}^n a_i X_i) = 0$
\end{enumerate}
\end{prop}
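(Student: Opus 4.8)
The plan is to prove the cycle of implications $(1) \Rightarrow (2) \Rightarrow (3) \Rightarrow (1)$, with the equivalence $(2) \Leftrightarrow (3)$ being essentially linear-algebraic bookkeeping and the serious content living in $(1) \Rightarrow (2)$ and $(3) \Rightarrow (1)$.

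For $(1) \Rightarrow (2)$: suppose $\sum_{i=1}^n a_i \gamma_i = 0$ with $\gamma_1, \dots, \gamma_n$ the roots (with multiplicity, if the $\gamma_i$ are not required distinct — but here they are conjugates, so actually a subset of the full root set, possibly with repetition among the $\gamma_i$) of an irreducible $f \in K[x]$ of degree $d$. Let $E/K$ be the Galois closure with $G = \Gal(E/K)$, which acts transitively on the conjugates. For each $\sigma \in G$, applying $\sigma$ to the relation gives $\sum_{i=1}^n a_i \sigma(\gamma_i) = 0$, a new solution tuple $(\sigma(\gamma_1), \dots, \sigma(\gamma_n)) \in E^n$ (here we use $a_i \in K$ so they are fixed). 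I would take $N = |G|$ and the multiset of tuples $\{(\sigma(\gamma_1), \dots, \sigma(\gamma_n))\}_{\sigma \in G}$. The balancedness is exactly orbit-counting: for fixed $j$, the multiset $\{\sigma(\gamma_j)\}_{\sigma \in G}$ is the multiset in which each conjugate of $\gamma_j$ appears $|G|/d$ times (by transitivity and orbit–stabilizer), and this is manifestly independent of $j$ since $\gamma_j$ is conjugate to $\gamma_1$ for every $j$. The nonzero hypothesis on conjugates guarantees each tuple is a nonzero solution.

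For $(3) \Rightarrow (1)$: given permutation matrices $X_1, \dots, X_n$ of some common size $d$ with $\det(\sum_i a_i X_i) = 0$, I want to build a linear Galois relation. Pick an $S_d$-extension $L/K$ (using the hypothesis on $K$) with Galois group identified with $S_d$ acting on $\{1, \dots, d\}$, and let $\beta$ be a primitive element whose conjugates $\beta_1, \dots, \beta_d$ are permuted by $S_d$ in the standard way. Each permutation matrix $X_i$ corresponds to a permutation $\pi_i \in S_d$; set $\gamma_i := \beta_{\pi_i(1)}$ (or more symmetrically, think of $X_i$ acting on the column vector $(\beta_1, \dots, \beta_d)^T$). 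Then $\sum_i a_i X_i$ applied to $(\beta_1, \dots, \beta_d)^T$ has first coordinate $\sum_i a_i \beta_{\pi_i(1)} = \sum_i a_i \gamma_i$. The vanishing determinant says $\sum_i a_i X_i$ is singular, so its kernel is nonzero; the point is that $(\beta_1, \dots, \beta_d)^T$ is a ``generic'' vector — because $\beta_1, \dots, \beta_d$ are linearly independent over $K$ (they form a normal-type basis, or at least their $K$-span has dimension $d$ since the minimal polynomial has degree $d$) — so a fixed $K$-matrix kills $(\beta_1, \dots, \beta_d)^T$ only if it is the zero matrix, which would contradict... hmm, that is the wrong direction. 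Instead: since $\det(\sum a_i X_i) = 0$, there is a nonzero kernel vector, but I need the *specific* vector of conjugates in the kernel. The fix is that the $S_d$-action lets me average: the kernel of $\sum a_i X_i$ is an $S_d$-stable... no. Let me reconsider — the correct approach is surely to use that I get to *choose* which $S_d$-extension and which labeling, and then observe that $\sum_i a_i \gamma_i$ with $\gamma_i = \beta_{\pi_i(1)}$ need not vanish, but some $K$-linear combination of the rows does vanish, and by transitivity of $S_d$ on positions I can symmetrize. I expect the actual argument (which is Smyth's) is: the all-ones vector approach, or picking the conjugates cleverly so the relation is one row of the singular matrix equation, using that the kernel vector can be taken in $K^d$ and pairing it appropriately — I would reconstruct this from \cite{Smy86}.

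For $(2) \Rightarrow (3)$: given a balanced multiset $\{(x_{i_1}, \dots, x_{i_n})\}_{i=1}^N$, the distinct values appearing are some $v_1, \dots, v_d$, and balancedness says each $v_\ell$ appears the same number of times in each column. Stack the $N$ solution tuples as rows of an $N \times n$ matrix $M$ with entries in $\{v_1, \dots, v_d\}$; the condition $\sum_i a_i x_i = 0$ on each row says $M$ times $(a_1, \dots, a_n)^T$ is zero. Re-encode: writing $M = \sum_{\ell=1}^d v_\ell P_\ell$ where $P_\ell$ is the 0-1 indicator matrix of where $v_\ell$ occurs, balancedness says each $P_\ell$ has constant column sums (and row sums, automatically, summing to constant) — I then need to massage these into genuine permutation matrices of a common size, which is a Birkhoff–von Neumann / doubly-stochastic decomposition step (pass to the $N \times N$ setting by repeating, or directly observe $P_\ell$ is a disjoint union of partial permutations that assemble into one permutation matrix after suitable padding). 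The main obstacle I anticipate is precisely this combinatorial conversion between the "balanced multiset" language and the "sum of permutation matrices with singular sum" language — getting the sizes to match and justifying the Birkhoff-type decomposition cleanly. Everything else is orbit-counting and a standard application of the $S_d$-extension hypothesis.
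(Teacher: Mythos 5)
The serious gap is in your $(3) \Rightarrow (1)$ step, and you essentially concede it: the attempt to set $\gamma_i := \beta_{\pi_i(1)}$ collapses because the vector of conjugates $(\beta_1,\dots,\beta_d)^T$ has no reason to lie in the kernel of $\sum_i a_i X_i$, and your closing ``I would reconstruct this from \cite{Smy86}'' is precisely the step being asked for --- it is the only place the $S_d$-extension hypothesis on $K$ enters, and it is the substantive content of the equivalence (the paper handles it by citing \cite{Smy86}, Thm.\ 2, for $(1) \Leftrightarrow (2)$ and observing the proof only needs $S_d$-extensions). The missing construction runs as follows: from $(3)$ take a nonzero kernel vector $v_n \in K^N$ of $\sum_i a_i X_i$ (the matrix is over $K$, so the kernel is defined over $K$) and set $v_i := X_i v_n$; the rows $(x_{i1},\dots,x_{in})$ of the resulting array form a balanced multiset with entries in $K$. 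Then choose conjugates $\alpha_1,\dots,\alpha_N$ generating an $S_N$-extension of $K$ with $\sum_i \alpha_i \neq 0$ (as in Lemma \ref{lem:NormalBasis}) and put $\gamma_j := \sum_{i=1}^N x_{ij}\alpha_i$. Since column $j$ is a permutation of column $1$ and the $S_N$-action realizes every permutation of the $\alpha_i$, each $\gamma_j$ is a Galois conjugate of $\gamma_1$, and $\sum_j a_j \gamma_j = \sum_i \bigl(\sum_j a_j x_{ij}\bigr)\alpha_i = 0$; one must still argue $\gamma_1 \neq 0$ (this is where the normalization of the $\alpha_i$, or Smyth's corresponding care, is used). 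None of this appears in your proposal, so the cycle of implications is not closed.

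Two further points. First, your $(1) \Rightarrow (2)$ argument via the Galois orbit $\{(\sigma(\gamma_1),\dots,\sigma(\gamma_n))\}_{\sigma \in G}$ is a clean counting argument, but it produces tuples with entries in the Galois closure $E$, whereas the paper's definition of a balanced multiset requires entries in $K$; as written it does not prove $(2)$. It is repairable by routing through $(3)$: your orbit array shows each coordinate column is a permutation of any other, which yields permutation matrices over $K$ with $\det(\sum_i a_i X_i)=0$, and then a nonzero $K$-rational kernel vector gives a balanced multiset with entries in $K$ (this is essentially the paper's Proposition \ref{forward}, which moreover gets size $[K(\gamma):K]$ rather than $|G|$). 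Second, your $(2) \Rightarrow (3)$ via indicator matrices indexed by the distinct values and a Birkhoff--von Neumann decomposition is the wrong bookkeeping (the row sums of your $P_\ell$ are not constant, and the decomposition is indexed by values rather than by coordinates, so it does not match the target); the step is a one-liner: balancedness says exactly that each column vector $v_j = (x_{ij})_{i=1}^N$ is a permutation of $v_n$, so pick $X_j$ with $X_j v_n = v_j$ and note $(\sum_j a_j X_j)v_n = 0$ with $v_n \neq 0$. This is the paper's argument for $(2) \Leftrightarrow (3)$.
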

\begin{proof}
Smyth proved (1) $\iff$ (2) for $K=\mathbb{Q}$ \cite[Thm. 2]{Smy86}, and the proof works for any $K$ satisfying the hypotheses of this theorem.

(2) $\implies$ (3): Let $T=\{(x_{i1}, x_{i2},...,x_{in})\}_{i=1}^N$ be a balanced multiset of tuples of size $N$. For $j: 1 \leq j \leq n$, let $v_j = (x_{ij})_{i=1}^N$ be the vector in $K^N$ obtained by taking the $j^{th}$ entry from each tuple in $T$. By definition of balanced multiset, there exist (not necessarily unique) $N \times N$ permutation matrices $X_1,...,X_n$ such that $X_i v_n = v_i$ for all $i$. Thus $(\sum_{i=1}^n a_i X_i) v_n = \sum_{i=1}^n a_i v_i = 0$, so $\sum_{i=1}^n a_i X_i$ has nontrivial kernel.

(3) $\implies$ (2): Reverse the previous argument as follows. Let $v_n$ be any nonzero vector in the kernel of $\sum_{i=1}^n a_i X_i$ and let $v_i:= X_i v_n$ for $1 \leq i \leq n$. Then the coordinates of the vectors $v_1,...,v_n$ give a balanced multiset of tuples as above.
\end{proof}

Our main theorem is the following.

\begin{theorem} \label{smyth-conj-Fqt}
Let $n \geq 3$ be an integer. A coprime tuple $(a_1,...,a_n) \in \mathbb{F}_q[t]^n$ is a Smyth tuple if and only if $a_1, \dots, a_n$ satisfy the absolute value criteria over $\bbF_q(t)$.
\end{theorem}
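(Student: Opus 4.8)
The plan is to prove both directions using the combinatorial characterization from Proposition \ref{equivalent-conditions}, i.e., we reduce everything to the question of whether a balanced multiset of tuples (equivalently, a vanishing $\det(\sum a_i X_i)$ with permutation matrices $X_i$) exists with respect to $(a_1,\dots,a_n)$. The necessity direction is the easier half: if $(a_1,\dots,a_n)$ is a Smyth tuple, then there is a genuine linear Galois relation $\sum a_i\gamma_i=0$, and for any absolute value $|\cdot|$ on $\bbF_q(t)$ we extend it to $\overline{\bbF_q(t)}$; since all the $\gamma_i$ are conjugate, they all have the same absolute value $|\gamma|$, so from $a_i\gamma_i=-\sum_{j\neq i}a_j\gamma_j$ we get $|a_i|\,|\gamma|\le \sum_{j\neq i}|a_j|\,|\gamma|$ in the archimedean case (there are none for $\bbF_q(t)$, so (1') is vacuous) and $|a_i|\,|\gamma|\le\max_{j\neq i}|a_j|\,|\gamma|$ in the nonarchimedean case by the ultrametric inequality, giving (2'). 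Actually since $\bbF_q(t)$ has no archimedean places, the content of the criteria here is purely (2'): for every monic irreducible $p\in\bbF_q[t]$ and for the place at infinity, $v(a_i)\ge\min_{j\neq i}v(a_j)$ for all $i$, which for a coprime tuple amounts exactly to Smyth's condition (2) — every place divides at most $n-2$ of the $a_i$.

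For sufficiency, the strategy is constructive: assuming the absolute value criteria (equivalently, no place divides more than $n-2$ of the $a_i$, together with the degree/triangle-type inequality at the infinite place), we build an explicit balanced multiset of tuples with respect to $(a_1,\dots,a_n)$, then invoke (2)$\Rightarrow$(1) of Proposition \ref{equivalent-conditions} — which requires $S_d$-extensions of $\bbF_q(t)$ for all $d$, available since $\bbF_q(t)$ is Hilbertian — to upgrade this to an honest linear Galois relation. I would first reduce to the case $n=3$: the combinatorial heart of Smyth's argument over $\bbQ$ is that the condition on triples is the key case and the general $n$ case can be assembled from triples (e.g. by padding, or by noting that if $(a_1,\dots,a_n)$ satisfies the criteria one can group coordinates to find a sub-triple, up to sign adjustments, that still satisfies the triple criteria and extend a balanced multiset by zeros/trivial solutions in the remaining coordinates). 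So the crux becomes: given coprime $(a,b,c)\in\bbF_q[t]^3$ with $\deg a\le\max(\deg b,\deg c)$ and cyclically, and with no monic irreducible $p$ (nor the infinite place) dividing two of $a,b,c$ — i.e. $a,b,c$ pairwise coprime and the degrees satisfying the three triangle inequalities — construct solutions to $ax+by+cz=0$ forming a balanced multiset.

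The main obstacle, and where I expect the real work to be, is this explicit construction of a balanced multiset for a pairwise-coprime triple $(a,b,c)$ with the degree constraints. Over $\bbZ$ one uses a clever parametrization of solutions to $ax+by+cz=0$ coming from the geometry of the triangle with side lengths $|a|,|b|,|c|$; over $\bbF_q(t)$ one needs a function-field substitute — I would look for a family of solutions indexed by, say, points of $\bbP^1(\bbF_{q^m})$ or by a cover thereof, chosen symmetrically enough that each value in $\overline{\bbF_q(t)}$ occupying the $x$-slot also occupies the $y$- and $z$-slots equally often. A natural approach: find one solution $(x_0,y_0,z_0)$ with $x_0,y_0,z_0$ pairwise coprime of controlled degrees (possible by Bézout-type arguments using precisely the coprimality and degree hypotheses — this is where the absolute value criteria get used), then act by a finite group (Galois or a group of Möbius transformations) to spread it into a balanced multiset, verifying balancedness by a counting/orbit argument. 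The delicate points will be (i) guaranteeing a starting solution with the right coprimality so that the resulting tuple entries are actually distinct where needed (to possibly even get a 1-factor), and (ii) checking that the degree inequalities — the function-field analogue of the triangle inequality — are exactly what is needed for such a solution to exist, mirroring how Smyth's condition (1) is the triangle inequality over $\bbZ$. I would expect the nonarchimedean condition (2') to be used to clear common factors and the ``triangle'' condition at $v_\infty$ to be used to solve the relevant norm/degree equation.
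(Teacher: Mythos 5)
Your overall framework (reduce everything to balanced multisets via Proposition \ref{equivalent-conditions}) is the same as the paper's, but the sufficiency direction — which is the entire content of the theorem — has two genuine gaps. First, the proposed reduction to $n=3$ does not work. Padding a balanced multiset of triples with zeros in the remaining $n-3$ coordinates destroys balancedness: the multiset of entries in a padded coordinate position consists only of zeros, while the positions $i,j,k$ carry the triple's values, so the defining condition fails (and zeros cannot be specialized to nonzero Galois conjugates anyway). Moreover, a sub-triple satisfying the triple criteria need not exist: for instance $(p,p,q,q)\in\mathbb{F}_q[t]^4$ with $p\neq q$ irreducible of the same degree satisfies the absolute value criteria for $n=4$ (each prime divides at most $n-2=2$ of the entries, and the maximal degree is achieved at least twice), yet every sub-triple contains a repeated prime and so violates pairwise coprimality. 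Second, even in the case $n=3$ you do not actually produce a balanced multiset; you only outline a plan (find one coprime solution, act by some group, verify balance by an unspecified orbit count). The paper's resolution is much more direct and works for all $n\geq 3$ at once: take $V_N$ the polynomials of degree $<N$ with $N\geq d=\max_i\deg a_i$, and show (Proposition \ref{prop:SmythTupleNumberOfSolutions}) that the set of \emph{all} solutions $(x_1,\dots,x_n)\in V_N^n$ of $\sum_i a_ix_i=0$ is itself balanced, because for any fixed value in any fixed coordinate the number of completions is exactly $q^{N(n-2)-d}$; this count is carried out by reducing modulo a coefficient of maximal degree and applying the Chinese Remainder Theorem, and it is precisely here that both halves of the absolute value criteria are used (the degree condition to choose $a_n$ of maximal degree $d$ with $n\geq 2$ such coefficients, the local condition to guarantee a unit coefficient modulo each prime power). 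Without this counting argument, or some substitute construction, your proof of sufficiency is incomplete.

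A smaller but real error occurs in your necessity argument: conjugate elements do \emph{not} in general have the same absolute value under a fixed extension of a nonarchimedean place to $\overline{K}$ (Newton polygons with several slopes give counterexamples already over $\mathbb{Q}$, e.g. the roots of $x^2-x+2$ at the prime $2$, and the same phenomenon occurs over $\mathbb{F}_q(t)$). The standard fix is either to argue via a balanced multiset (pick an entry of maximal absolute value, which by balancedness occurs in every coordinate position) or to first apply an automorphism carrying $\gamma_i$ to a conjugate of maximal absolute value before estimating; the paper simply quotes Smyth's Lemma \ref{balanced-implies-criteria}, whose proof is of this kind. As written, your one-line justification of (2') is not valid.
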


The forward direction of Theorem \ref{smyth-conj-Fqt} was established by Smyth over $\mathbb{Q}$ in a proof that is valid over a general field $K$. We record this result in Lemma \ref{balanced-implies-criteria}.



\begin{lemma} \label{balanced-implies-criteria} \cite[Cor. 2]{Smy86}
Let $K$ be any field. If $(a_1,...,a_n) \in K^n$ is a Smyth tuple, then $a_1,...,a_n$ satisfy the absolute value criteria over $K$.
\end{lemma}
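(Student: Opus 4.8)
The plan is to use the equivalence (1) $\iff$ (2) from Proposition \ref{equivalent-conditions}, replacing the statement "$(a_1,\dots,a_n)$ is a Smyth tuple" with the more concrete statement "there exists a balanced multiset of tuples with respect to $(a_1,\dots,a_n)$." So suppose $T = \{(x_{i1},\dots,x_{in})\}_{i=1}^N$ is a balanced multiset, meaning each $(x_{i1},\dots,x_{in})$ solves $\sum_{j=1}^n a_j x_{ij} = 0$ and the multiset $\{x_{ij}\}_{i=1}^N$ of $j$-th coordinates does not depend on $j$. I want to deduce both (1') and (2').

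For a fixed index $k$ and any absolute value $|\cdot|$ on $K$, the key move is to pick the tuple in $T$ that witnesses the \emph{largest} value of $|x_{ik}|$ among $i=1,\dots,N$; say this is achieved at $i=i_0$, with $M := |x_{i_0 k}| = \max_i |x_{ik}|$. From the relation $a_k x_{i_0 k} = -\sum_{j\neq k} a_j x_{i_0 j}$ we get, in the archimedean case, $|a_k|\,M \le \sum_{j\neq k} |a_j|\,|x_{i_0 j}|$, and in the nonarchimedean case $|a_k|\,M \le \max_{j\neq k} |a_j|\,|x_{i_0 j}|$. The point now is that, because $T$ is balanced, for each $j\neq k$ the value $|x_{i_0 j}|$ also occurs as the absolute value of \emph{some} $k$-th coordinate in the multiset, hence $|x_{i_0 j}| \le M$ for all $j$. (Here we should note $M > 0$: since Galois conjugates are assumed nonzero, the corresponding solution vectors are nonzero, and balancedness propagates nonvanishing across all coordinates — this is the one spot requiring a small remark.) Dividing through by $M$ gives $|a_k| \le \sum_{j\neq k}|a_j|$ in the archimedean case and $|a_k|\le \max_{j\neq k}|a_j|$ in the nonarchimedean case, which are exactly (1') and (2').

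The one subtlety to handle carefully — and the step I expect to be the main (minor) obstacle — is justifying that $M > 0$, i.e.\ that not all $k$-th coordinates are zero, and more generally that one may choose the balanced multiset so that no coordinate slot is identically zero. If some coordinate were identically zero across $T$, then by balancedness $0$ appears in every coordinate slot, and one would want to either discard the zero entries or argue directly; the cleanest route is to observe that a balanced multiset with a zero in coordinate slot $k$ forces, via the defining equation applied to tuples where $x_{ik}\neq 0$, that balancedness still supplies a positive maximum unless \emph{every} entry is zero, which is excluded since the multiset is nonempty and (for the Smyth-tuple application) arises from nonzero Galois conjugates. Everything else is the short two-line inequality chase above, applied once per absolute value and once per index $k$. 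Since Proposition \ref{equivalent-conditions} already packages the combinatorial translation, the proof reduces entirely to this elementary estimate.
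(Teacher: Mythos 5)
Your argument is correct, and it is essentially the argument behind the result the paper simply cites: the paper gives no proof of this lemma, quoting \cite[Cor.~2]{Smy86}, and Smyth's own derivation likewise passes through the balanced-multiset characterization and a maximal-coordinate estimate, so you have reconstructed the intended proof rather than found a new one. The inequality chase is fine, and your treatment of $M>0$ is right: tuples in a balanced multiset are by definition nonzero, so if the $k$-th slot were identically zero then balancedness would force every slot to be identically zero, contradicting nonemptiness. One point to state more carefully: Proposition \ref{equivalent-conditions} is stated under the hypothesis that $K$ admits $S_d$-extensions for all $d$, while the lemma claims validity over an arbitrary field. You only use the implication (1) $\Rightarrow$ (2), which does not need that hypothesis (it is the construction of Proposition \ref{forward}: act by the Galois group on the relation, pass to the permutation representation, and extract a kernel vector over $K$), so you should either cite that direction explicitly or reproduce it, rather than invoke the proposition wholesale; as stated, your appeal to it is formally circular with respect to the hypotheses of the lemma. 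With that adjustment the proof is complete.
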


\section{Proof of Theorem \ref{smyth-conj-Fqt}} \label{proof section} 
We now turn toward the proof of our main result, Theorem \ref{smyth-conj-Fqt}, which will follow essentially as a corollary from Proposition \ref{prop:SmythTupleNumberOfSolutions}. 

Let $V_N \subset \bbF_q[t]$ be the set of polynomials of degree $ < N$.
We will show that, surprisingly, for any coprime $(a_1,..,a_n) \in \mathbb{F}_q[t]^n$ satisfying the absolute value criteria and any $N \geq d= \max \deg_i a_i$, the set of \textit{all} solutions $(x_1,...,x_n) \in V_N^n$ to the equation $\sum_{i=1}^n a_i x_i = 0$ is a balanced set.

\begin{prop}
\label{prop:SmythTupleNumberOfSolutions}
Let $n \geq 3$ be an integer. Let $(a_1, \dots, a_n) \in \bbF_q[t]^n$ of height $d$ be a coprime tuple satisfying the absolute value criteria. Let $N \geq d$ be an integer and let $j$ be an integer so that $1 \leq j \leq n$. Fix $x_j \in V_N$. Then the number of tuples $(x_1, \dots, x_{j-1}, x_{j+1}, \dots x_n)\in V_N^{n-1}$ satisfying $\sum_{i=1}^n a_i x_i = 0$ is $q^{N(n-2)-d}$. In particular, this count does not depend on $j$.
\end{prop}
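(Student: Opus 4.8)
The idea is to realize the quantity to be computed as the size of a fiber of an explicit $\mathbb{F}_q$-linear map and to reduce the whole statement to a surjectivity claim. Fix $j$ and $x_j\in V_N$, and consider the $\mathbb{F}_q$-linear map $\psi_j\colon V_N^{\,n-1}\to V_{N+d}$, $(x_i)_{i\neq j}\mapsto\sum_{i\neq j}a_ix_i$; this does land in $V_{N+d}$ since $\deg(a_ix_i)\le d+(N-1)<N+d$, and likewise $-a_jx_j\in V_{N+d}$. The tuples to be counted form the fiber $\psi_j^{-1}(-a_jx_j)$. So if $\psi_j$ is surjective, this fiber is a nonempty coset of $\ker\psi_j$ and has size $q^{\dim V_N^{\,n-1}-\dim V_{N+d}}=q^{(n-1)N-(N+d)}=q^{(n-2)N-d}$, with no dependence on $j$ or $x_j$. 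Everything thus comes down to showing each $\psi_j$ is onto.

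I would next note that the absolute value criteria descend to the $(n-1)$-element subtuples $(a_i)_{i\neq j}$. Over $\mathbb{F}_q(t)$ the criterion at the place at infinity reads $\deg a_i\le\max_{k\neq i}\deg a_k$ for all $i$, which forces the top degree $d$ to be attained at least twice, so every $(n-1)$-subtuple still has height $d$; and the criterion at a monic irreducible $\pi$ reads $v_\pi(a_i)\ge\min_{k\neq i}v_\pi(a_k)$ for all $i$, which (together with coprimality, giving $\min_k v_\pi(a_k)=0$) forces at least two of the $a_i$ to be prime to $\pi$, hence every $(n-1)$-subtuple has an entry prime to $\pi$; since this holds for every $\pi$, every $(n-1)$-subtuple is coprime. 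So it suffices to prove the following lemma and apply it to $(b_1,\dots,b_m)=(a_i)_{i\neq j}$ with $m=n-1$: if $b_1,\dots,b_m\in\mathbb{F}_q[t]$ are coprime with $d:=\max_i\deg b_i$ and $N\ge d$, then $\psi\colon V_N^{\,m}\to V_{N+d}$, $(x_i)\mapsto\sum_i b_ix_i$, is surjective.

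To prove the lemma I would show the transpose of $\psi$ is injective, using on each $V_M$ the perfect pairing $\langle f,g\rangle=[t^{M-1}](fg)$ (the coefficient of $t^{M-1}$ in $fg$; its Gram matrix in the monomial basis is antidiagonal, hence invertible). A direct computation identifies the transpose of $\psi$ with the map sending $g\in V_{N+d}$ to the tuple whose $i$-th coordinate is $\sum_{s=0}^{N-1}[t^{s+d}](gb_i)\,t^s$; hence $g$ is in the kernel of the transpose exactly when, for every $i$, the coefficients of $gb_i$ in degrees $d,d+1,\dots,N+d-1$ all vanish. As $\deg(gb_i)<N+2d$, this says $gb_i=r_i+t^{N+d}s_i$ with $\deg r_i,\deg s_i<d$. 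Expanding $gb_ib_k$ in two ways gives $b_kr_i-b_ir_k=t^{N+d}(b_is_k-b_ks_i)$, whose left-hand side has degree $<2d\le N+d$; a nonzero right-hand side would have strictly larger degree, so $b_ir_k=b_kr_i$ and $b_is_k=b_ks_i$ for all $i,k$. A Bézout relation $\sum_k u_kb_k=1$ then yields $r_i=\big(\sum_k u_kr_k\big)b_i$ and $s_i=\big(\sum_k u_ks_k\big)b_i$; comparing degrees at an index $i$ with $\deg b_i=d$ and using $\deg r_i,\deg s_i<d$ forces $\sum_k u_kr_k=\sum_k u_ks_k=0$, so all $r_i$ and $s_i$ vanish, $gb_i=0$ for every $i$, and therefore $g=0$.

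I expect the technical heart to be this middle part — setting up the duality and unwinding the transpose of $\psi$ correctly — after which the degree comparison and the Bézout step are short. (Alternatively one could try to prove the surjectivity lemma by induction on $m$ via polynomial division, but keeping track of the cases — whether some $b_i$ is a unit, and which index realizes the top degree — seems to make that route more cumbersome.)
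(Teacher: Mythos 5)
Your proof is correct, and it takes a genuinely different route from the paper's. You recast the count as a fiber of the $\mathbb{F}_q$-linear map $\psi_j\colon V_N^{\,n-1}\to V_{N+d}$, $(x_i)_{i\neq j}\mapsto\sum_{i\neq j}a_ix_i$, reducing everything to surjectivity of $\psi_j$, which you establish by showing the transpose (with respect to the pairing $\langle f,g\rangle=[t^{M-1}](fg)$ on $V_M$) is injective. The two inputs you extract from the hypotheses — the $(n-1)$-subtuple still has height $d$, and is still coprime — are exactly the consequences the paper also uses (the top degree is attained at least twice; at least two entries are prime to each irreducible), and your derivations of them are right. The paper instead counts directly: after singling out a coefficient $a_n$ of degree $d$, it exhibits the solution set as a $q^{(N-d)(n-2)}$-to-one fibration over the residue solutions modulo $a_n$ and counts those by CRT, solving for one variable whose coefficient is a unit modulo each prime power. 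Your formulation buys a cleaner explanation of why the answer is exactly $q^{\dim V_N^{\,n-1}-\dim V_{N+d}}$, uniformly in $j$ and $x_j$, at the cost of heavier machinery in the surjectivity step; note that your lemma (coprime $b_1,\dots,b_m$ of maximal degree $d$, $N\ge d$, implies $\sum_i b_iV_N=V_{N+d}$) also has a short direct proof in the paper's spirit: take a B\'ezout relation $\sum_k u_kb_k=1$, write $f=\sum_k (fu_k)b_k$, and reduce each cofactor $fu_k$ ($k$ other than an index with $\deg b_k=d$) modulo that degree-$d$ entry, which pushes all cofactors into $V_N$ — essentially the same division-mod-$a_n$ mechanism the paper uses. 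The details of your duality argument (the formula for $\psi^T$, the degree comparison using $2d\le N+d$, and the B\'ezout step forcing $r_i=s_i=0$) all check out, including degenerate cases such as a zero entry in the subtuple or $d=0$.
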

\begin{proof}
Without loss of generality, we let $j = 1$. By the absolute value criteria, the maximum degree of $a_1, \dots, a_n$ is achieved at least twice. Hence, some $a_i$ with $i \neq 1$ has degree $d$; without loss of generality, assume that $a_n$ does. Let $c=a_1x_1$. Define $$S=\{(x_2, \dots, x_n) \in V_N^{n-1}: c+\sum_{i=2}^n a_i x_i = 0\} $$ Our goal is to compute $\# S$. To do so, we will project onto $\bbF_q[t]/a_n$, so we define $$ \overline{S} = \{ (\overline{x_2}, \dots, \overline{x_{n-1}})\in( \bbF_q[t]/a_n)^{n-2} : \overline{c}+\sum_{i=2}^{n-1}\overline{a_ix_i}=0\}$$ 

Reducing modulo $a_n$ in each coordinate and throwing out the last coordinate gives a surjective $q^{(N-d)(n-2)}$-to-1 map $S \to \overline{S}$; the pre-image of any $(\overline{x}_2,...,\overline{x}_{n-1}) \in \overline{S}$ is $$\left\{\left(x_2 + h_2 a_n,..., x_{n-1} + h_{n-1}a_n, -\left(\frac{c+\sum_{i=2}^{n-1}a_ix_i}{a_n}+\sum_{i=2}^{n-1} a_ih_i\right)\right): h_i \in V_{N-d}\right\},$$ where $x_i$ is the unique polynomial of degree $<d$ equal to $\overline{x}_i$ mod $a_n$. Thus, we have $\#S = q^{(N-d)(n-2)} \#\overline{S}$.

So we want to count the number of solutions $(\overline{x}_2,...\overline{x}_{n-1}) \in (\bbF_q[t]/(a_n))^{n-2}$ to $\overline{c} + \sum_{i=2}^{n-1} \overline{a_ix_i} = 0$. Let $a_n=\prod p_j^{e_j}$ be the prime factorization of $a_n$. Let $R_j:= \bbF_q[t]/(p_j^{e_j})$; by the Chinese Remainder Theorem, it will suffice to count the number of solutions in $R_j$ for each $j$.

Specifically, let $\overline{S}_j = \{(\overline{x_2},..., \overline{x}_{n-1}) \in R_j^{n-2} : \bar{c} + \sum_{i=2}^{n-1} \overline{a_i} \overline{x_i} = 0\}$; then the Chinese Remainder Theorem implies that 

$$\# \overline{S} = \prod_j \# \overline{S}_j.$$

We now compute $\# \overline{S}_j$. Recall that by the absolute value criteria, for all $j$, there are at least two $a_i$ that are not divisible by $p_j$. Of course $a_n$ is divisible by all $p_j$; hence, for all $j$, there is at least one $a_i$, with $1 < i < n$, such that $p_j \nmid a_i$, in which case $\overline{a_i}$ is a unit in $R_j$. Thus, we can write $\overline{x_i}=\frac{\overline{c}+\sum_{k \neq i, k \neq 1} \overline{a_k x_k}}{\overline{a_i}}$, and so any collection of choices of $\overline{x_k}$ for $k \in \{2,...,n-1\} \setminus \{i\}$, will give a unique choice of $\overline{x_i}$. There are $\#R_j = q^{\deg(p_j^{e_j})}$ choices for each $\overline{x}_k$, so $\# \overline{S}_j = q^{\deg(p_j^{e_j})(n-3)}$. Thus, since $\sum_j \deg(p_j^{e_j}) = \deg(a_n) = d$, we have $$ \# S = q^{(N-d)(n-2)} \prod_j q^{\deg(p_j^{e_j})(n-3)}= q^{(N-d)(n-2)} q^{d(n-3)} = q^{N(n-2)-d}, \text{ as desired.}$$
\end{proof}

Now Theorem \ref{smyth-conj-Fqt} follows easily.

\begin{proof}[Proof of Theorem \ref{smyth-conj-Fqt}] 

($\Rightarrow$): This was Lemma \ref{balanced-implies-criteria}. 

($\Leftarrow$): Without loss of generality, we assume $(a_1, \dots, a_n)$ is a coprime tuple in $\bbF_q[t]^n$. Let $T_N$ be the set of all tuples $(x_1,\dots,x_n) \in V_N^n$ satisfying $\sum_{i=1}^n a_i x_i = 0$ and enumerate $T_N = \{(x_{i1},\dots,x_{in})\}_{i=1}^t$ where $t = |T_N|$. In Proposition \ref{prop:SmythTupleNumberOfSolutions} we showed that for every $x \in V_N$ and all $i: 1 \leq i \leq n$, the number of tuples $(x_1,\dots,x_n)$ in $T_N$ with $x_i = x$ is $q^{N(n-2)-d}$. This means that for each $j: 1 \leq j \leq n$, the multiset $\{(x_{ij})\}_{i=1}^t$ is precisely $q^{N(n-2)-d}$ copies of $V_N$. Thus, $T_N$ is a balanced (multi)set of tuples. So by Proposition \ref{equivalent-conditions}, $(a_1,\dots,a_n)$ is a Smyth tuple.
\end{proof}

\begin{remark} \label{1-factor-case}
By setting $N=d= \max_i \deg(a_i)$ and $n=3$ in Proposition \ref{prop:SmythTupleNumberOfSolutions}, we see that if $(a_1,a_2,a_3)$ is a Smyth triple, then $T_d$ is a 1-factor.
\end{remark}

\section{Degree of Galois Conjugates in Linear Galois Relations} \label{deg-of-gal-conj}

In the previous section, we produced balanced multisets for all Smyth tuples, i.e. all tuples for which it is possible to find a balanced multiset. We relied on Proposition \ref{equivalent-conditions} to turn the balanced multisets into linear Galois relations. In this section, we consider the question of how close these linear Galois relations are to ``as small as possible.'' As a first attempt to formulate this question, we might ask for a uniform lower bound $\ell_n(D)$ on the degree of all linear Galois relations whose coefficients are an $n$-tuple of height $D$. However, this is quickly seen to be an uninteresting question, as there exist Smyth tuples of arbitrarily large height that are coefficients of a bounded degree linear Galois relation. (For instance, if $\sum_{i=1}^n \epsilon_i a_i = 0$ where $\epsilon_i \in \{1,-1\}$, then there is a balanced multiset with respect to $(a_1,...,a_n)$ of size $2$.) So instead, we ask for a lower bound $L_n(D)$ on the ``worst'' Smyth $n$-tuple of height $D$.

\begin{definition} \label{L_n(D)}
Let $L^K_n(D)$ be the minimal $L$ so that there is some $(a_1,...,a_n) \in K^n$ of height $D$ such that every linear Galois relation with coefficients $a_1,...,a_n$ has degree at least $L$ 
\end{definition}

In this section the underlying field $K$ will always be $\mathbb{F}_q(t)$, and so we will omit it from the notation. In Section \ref{smyth-over-Q} we will record analogous results about $L^\mathbb{Q}_3(D)$.

In Corollary \ref{ex:BigBalancedSetOfTuples}, we prove that there exist Smyth triples $(a_1,a_2,a_3)$ of height $D$ for which one cannot find a balanced multiset of size less than $|V_D \setminus \{0\}|=q^D-1$. In other words, besides removing zero from the multiset, our construction in Section 4 cannot be uniformly improved upon. 

In Proposition \ref{forward}, we (non-constructively) show that a $3$-term linear Galois relation of degree $d$ with coefficients $(a_1,a_2,a_3)$ implies the existence of a balanced multiset with respect to $(a_1,a_2,a_3)$ of size $d$. Thus we obtain in Corollary \ref{L3(D)-bound} the lower bound $q^D -1 \leq L_3(D)$.

Note that this does not imply that the linear Galois relations arising from our balanced multisets in the $n=3$ case have degree as small as possible; the linear Galois relations arising from a balanced multiset of size $q^D-1$ may have degree as large as $(q^D-1)!$ (see Theorem \ref{Gal-relation-from-1-factor}).

\subsection{A Map of Representations}

We will use the framework of representation theory to prove Proposition \ref{forward}. We now introduce in a general setting the representations we will be working with. 

Let $K$ be a field and let $L/K$ be a Galois extension with Galois group $G$. Let $\gamma \in L$ be any element and let $V_{GalConj}$ be the $K$-vector space spanned by the conjugates of $\gamma$. Then $V_{GalConj}$ is a $G$-invariant subspace of $L$, and so the action of $G$ on $L$ as a $K$-vector space gives an action of $G$ on $V_{GalConj}$. We call this the \textit{Galois conjugate representation}. 

Suppose that $\gamma$ has degree $d$ over $K$. Let $\gamma_1, \dots, \gamma_d$ be the Galois conjugates of $\gamma$. Let $V_{Perm}$ be a $d$-dimensional vector space over $K$ with basis $\alpha_1,...,\alpha_d$ and define an action of $G$ on $V_{Perm}$ by $g \cdot \alpha_i = \alpha_j$ whenever $g(\gamma_i) = \gamma_j$. This is the \textit{permutation representation}. We will write $(V_{Perm}, \rho_{Perm})$ and $(V_{GalConj},\rho_{GalConj})$ to denote the permutation and Galois conjugate representations of $G$, respectively. 

Notice that specializing the formal variables $\alpha_i$ to the Galois conjugates $\gamma_i$ gives a surjective map $\phi$ of $G$-representations from $(V_{Perm}, \rho_{Perm})$ to $(V_{GalConj},\rho_{GalConj})$.




\begin{prop}\label{forward}
Let $K=\bbF_q(t)$. Suppose that $(a_1,...,a_n) \in \mathbb{F}_q[t]^n$ is a coprime tuple and that $\gamma = \gamma_1,...,\gamma_n$ are Galois conjugates over $K$ such that $\sum_{i=1}^n a_i \gamma_i = 0$. Let $m := [K(\gamma): K]$. Then there exists a balanced multiset of tuples with respect to $(a_1,...,a_n)$ of size $m$.
\end{prop}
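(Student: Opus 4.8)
The plan is to use the map of $G$-representations $\phi\colon V_{Perm}\twoheadrightarrow V_{GalConj}$ introduced just above, where $G=\Gal(E/K)$ for $E$ the Galois closure of $K(\gamma)$, and to pull the given relation $\sum_i a_i\gamma_i=0$ back through $\phi$ to produce a relation among the formal basis vectors $\alpha_i$ that will encode a balanced multiset. Concretely, I would first reindex so that $\gamma=\gamma_1$ and the $\gamma_i$ are the distinct conjugates (so $m=n$ in the ``all conjugates distinct'' case; in general $\{\gamma_1,\dots,\gamma_n\}$ is a sub-multiset of the $m$ conjugates, and the permutation representation $V_{Perm}$ is the $m$-dimensional one on the full conjugate set). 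The element $v:=\sum_{i=1}^n a_i\alpha_{\sigma(i)}\in V_{Perm}$, where $\alpha_{\sigma(i)}$ is the basis vector attached to the conjugate $\gamma_i$, maps under $\phi$ to $\sum a_i\gamma_i=0$, so $v$ lies in $\ker\phi$.

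The key step is then to average $v$ over the group to get many relations at once. For each $g\in G$, $g\cdot v=\sum_i a_i\,\alpha_{g\sigma(i)}\in\ker\phi$ as well, since $\ker\phi$ is $G$-stable. Now I expand in the standard basis: writing $g\cdot v=\sum_{k=1}^m x_{g,k}\alpha_k$ where $\alpha_1,\dots,\alpha_m$ is the basis of $V_{Perm}$ indexed by the conjugates $\gamma_1,\dots,\gamma_m$, the condition $\phi(g\cdot v)=0$ says exactly $\sum_{k=1}^m x_{g,k}\gamma_k=0$. Here the coefficient vector $(x_{g,1},\dots,x_{g,m})$ is, up to the permutation $g\sigma$, just the tuple $(a_1,\dots,a_n)$ padded with zeros. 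The crucial observation is that $\sum_k x_{g,k}\gamma_k=0$ is, after relabeling the conjugates via the orbit structure, an instance of a solution to $\sum a_i x_i=0$: more precisely, reading off the entries in the positions indexed by a fixed orbit representative set gives tuples $(y_{g,1},\dots,y_{g,n})\in K^n$ with $\sum_i a_i y_{g,i}=0$. To get this one needs that $\gamma_1,\dots,\gamma_m$ satisfy \emph{only} the $K$-linear relations forced by the Galois action on $V_{GalConj}$ — equivalently $V_{GalConj}\cong V_{Perm}/\ker\phi$ — which is automatic; the subtlety is organizing which coordinate of $V_{Perm}$ corresponds to which ``slot'' so that the collection $\{(y_{g,1},\dots,y_{g,n})\}_{g\in G}$ (or a suitable sub-multiset of size $m$) is \emph{balanced}, i.e. each value occurs the same number of times in each coordinate.

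The cleanest way to extract balance is to work directly with $\phi$'s source. Let $H=\mathrm{Stab}_G(\gamma_1)$, so $|G/H|=m$ and $G$ acts on the $m$ cosets; index the basis $\alpha_1,\dots,\alpha_m$ of $V_{Perm}$ by $G/H$. Fix the vector $v=\sum_{i=1}^n a_i\alpha_{c_i}\in\ker\phi$ for suitable cosets $c_1,\dots,c_n$ (the cosets corresponding to the given conjugates). For each coset $c\in G/H$ define the solution tuple $(x^{(c)}_1,\dots,x^{(c)}_n)$ by: pick $g\in G$ with $g\cdot(\text{base coset})=c$ and let $x^{(c)}_i$ be the $\alpha_c$-coordinate of — better, run over $g\in G/H$ and read the coordinate of $g\cdot v$ in a \emph{fixed} position. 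Because left multiplication by the elements of $G/H$ permutes the $m$ positions transitively and freely \emph{as a set of functions} (this is the regular-action flavor of the coset space), the multiset $\{x^{(g)}_j\}_{g}$ in any fixed slot $j$ is independent of $j$ — each element of $K$ that appears in any slot of $v$ gets cycled through all $m$ positions exactly once as $g$ ranges over a transversal. That gives a balanced multiset of size exactly $m=[K(\gamma):K]$, as claimed.

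I expect the main obstacle to be precisely this last bookkeeping: the conjugates $\gamma_i$ appearing in the relation may repeat or may be a proper sub-multiset of all $m$ conjugates of $\gamma$, so one must be careful that (a) the zero-padding of $(a_1,\dots,a_n)$ up to length $m$ is consistent across all group elements, and (b) the action used to generate the $m$ tuples really does cycle each coordinate through every position the same number of times — this is where the difference between the (transitive but not regular) action on $G/H$ and the regular action on $G$ matters, and one may need to either pass to the regular representation and then descend, or argue via a counting/double-counting identity (for each value $x\in K$, the number of pairs $(g,j)$ with $x^{(g)}_j=x$ is $|G|/|H|$ times the number of slots of $v$ equal to $x$, distributed evenly). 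Once that combinatorial identity is nailed down, the balanced multiset of size $m$ drops out and Proposition~\ref{forward} follows.
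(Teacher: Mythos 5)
There is a genuine gap, and it is located exactly where you sensed trouble, but it is more serious than ``bookkeeping.'' A balanced multiset must consist of tuples $(x_1,\dots,x_n)\in K^n$ solving $\sum_i a_i x_i=0$, and the standard way to produce one (Proposition~\ref{equivalent-conditions}, (3)$\Rightarrow$(2)) is from a nonzero vector $w$ in the kernel of the operator $\psi_{Perm}=\sum_i a_i\rho_{Perm}(g_i)$ on $V_{Perm}$: the tuples are then $\bigl((X_1w)_k,\dots,(X_nw)_k\bigr)$ over the $m$ coordinate positions, each row solves the equation because $(\sum_i a_iX_i)w=0$, and balance holds because each $X_iw$ is a permutation of the entries of $w$. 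Your proposal never produces such a $w$. Instead you work with the kernel of the specialization map $\phi$ and the $G$-orbit of the single vector $v=\sum_i a_i\alpha_{c_i}$. The coordinates of $g\cdot v$ are the coefficients $a_i$ themselves (zero-padded and permuted), and the condition they satisfy is $\sum_k (g\cdot v)_k\,\gamma_k=0$ --- a relation among the \emph{conjugates}, not a solution of $\sum_i a_i x_i=0$ in $K$. Tuples read off from these orbit vectors in fixed slots have entries drawn from $\{0,a_1,\dots,a_n\}$ and would have to satisfy identities like $\sum_i a_i a_{\tau(i)}=0$, which fail in general; no argument in the proposal supplies the needed linear equation, and none can, because $\ker\phi$ and $\ker\psi_{Perm}$ are different objects with no direct map between them. (A secondary error: left multiplication on $G/H$ is transitive but not free when $H\neq 1$, so the ``each value cycles through every position exactly once'' claim also fails as stated.)

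The missing idea is the transport-of-singularity step: since $\psi_{GalConj}=\sum_i a_i g_i$ kills the nonzero element $\gamma\in V_{GalConj}$, and $\phi\colon V_{Perm}\twoheadrightarrow V_{GalConj}$ intertwines $\psi_{Perm}$ with $\psi_{GalConj}$, a short diagram chase (if $\psi_{Perm}$ were invertible, $\phi\circ\psi_{Perm}$ would be surjective, whereas $\psi_{GalConj}\circ\phi$ is not) shows $\psi_{Perm}$ has nontrivial kernel on the $m$-dimensional space $V_{Perm}$. Any nonzero kernel vector then yields, via Proposition~\ref{equivalent-conditions}, the balanced multiset of size $m=[K(\gamma):K]$. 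Your setup (the representations $V_{Perm}$, $V_{GalConj}$, the map $\phi$, the zero-padding of the relation) is the right stage, but until you convert the given Galois relation into singularity of $\sum_i a_iX_i$ on $V_{Perm}$ rather than into a vector of $\ker\phi$, the construction of a balanced multiset does not go through.
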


\begin{proof}
Let $E/K$ be the Galois closure of $K(\gamma_1,...,\gamma_n)$, and write $G:=\Gal(E/K)$.
Let $\gamma= \gamma_1$ and choose $g_1 =1,...,g_n \in G$ so that $g_i\gamma = \gamma_i$. Notice that $\sum_{i=1}^n a_i g_i$ gives a linear transformation on both $V_{Perm}$ and $V_{Gal}$, which we'll call $\psi_{Perm}$ and $\psi_{Gal}$ respectively. Moreover, they commute with the map of $G$-representations $\phi$, i.e. the following diagram commutes.

\begin{center}
\begin{tikzcd}
V_{Perm} \arrow[d, "\psi_{Perm}"]  \arrow[r, "\phi"] & V_{GalConj} \arrow[d, "\psi_{GalConj}"] \\ V_{Perm}  \arrow[r, "\phi"] & V_{GalConj}
\end{tikzcd}
\end{center}

By assumption there is a nonzero $\gamma \in \ker \psi_{GalConj}$ and a simple diagram-chasing argument shows that $\psi_{Perm}$ has nontrivial kernel too. Therefore $\det(\sum_{i=1}^n a_i g_i) = 0$ on $V_{Perm}$, and the construction from Proposition \ref{equivalent-conditions} yields a balanced multiset of tuples of size $m$.



\end{proof}

Smyth also produced balanced multisets of tuples from linear Galois relations in \cite[Thm. 2]{Smy86}. Given a linear Galois relation $\sum_{i=1}^n a_i \gamma_i=0$, Smyth's argument yielded balanced multisets of size the degree of the smallest normal extension $\bbQ(\beta)$ of $\bbQ$ containing $\gamma$ so that $[\bbQ(\beta):\bbQ(\gamma)] \geq n$. Our argument shows the existence of a smaller balanced set, of size $[\bbQ(\gamma):\bbQ]$.

\begin{prop} \label{closed under mult by -a/b}
Let $(a,b,c) \in \mathbb{F}_q[t]^3$ be a coprime triple. Any nonzero balanced set with respect to $(a,b,c)$ (if one exists) has size as least the order of $-a/b$ in $(\mathbb{F}_q[t]/c)^*$.
\end{prop}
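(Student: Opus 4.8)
The plan is to reduce everything modulo $c$. First I would dispose of a degenerate case: if $\gcd(ab,c)\ne 1$, then $-a/b$ is not a unit of $\bbF_q[t]/(c)$, and in fact no nonzero balanced set exists at all, so the statement holds vacuously. Indeed, if a prime $p$ divides $\gcd(b,c)$ (so $p\nmid a$ by coprimality of the triple), then reducing $ax_i+by_i+cz_i=0$ modulo $p$ forces $p\mid x_i$ for every $i$, hence by balancedness $p$ divides every entry; dividing the whole balanced set through by $p$ gives another nonzero balanced set for $(a,b,c)$ with strictly smaller entries, which iterates forever — impossible. The case $p\mid\gcd(a,c)$ is symmetric (look at the $y_i$). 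So we may assume $\gcd(ab,c)=1$, set $u:=-ab^{-1}\in(\bbF_q[t]/(c))^\times$, and let $m=\operatorname{ord}(u)$.

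Now take a nonzero balanced set $\{(x_i,y_i,z_i)\}_{i=1}^N$ for $(a,b,c)$. By the definition of a balanced set there are permutations $\sigma,\tau$ of $\{1,\dots,N\}$ with $y_i=x_{\sigma(i)}$ and $z_i=x_{\tau(i)}$, so $ax_i+bx_{\sigma(i)}+cx_{\tau(i)}=0$; reducing mod $c$ gives
\[
x_{\sigma(i)}\equiv u\,x_i \pmod c \qquad (1\le i\le N).
\]
Running around a length-$\ell$ cycle $(i_1\,i_2\,\cdots\,i_\ell)$ of $\sigma$ and composing these congruences yields $x_{i_1}\equiv u^{\ell}x_{i_1}\pmod c$, i.e. $(u^{\ell}-1)x_{i_1}\equiv 0\pmod c$. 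If the polynomial $x_{i_1}$ is coprime to $c$, we may cancel it modulo $c$ and conclude $u^{\ell}=1$ in $(\bbF_q[t]/(c))^\times$, so $m\mid\ell\le N$ and we are done. Thus it suffices to exhibit, in some nonzero balanced set for $(a,b,c)$, an entry that is coprime to $c$.

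To get there I would use a descent: if a prime $p\mid c$ divides every entry of the balanced set, divide all entries by $p$; the result is again a nonzero balanced set for $(a,b,c)$ of the same size $N$, but with all entries of strictly smaller degree. Iterating finitely often, we reach a nonzero balanced set (still of size $N$) in which no prime factor of $c$ divides every entry. When $c$ is a prime power this already forces an entry coprime to $c$, finishing the proof in that case — in particular in the case of $c$ irreducible needed for Corollary~\ref{ex:BigBalancedSetOfTuples}.

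The main obstacle is the general composite $c$: a priori every entry could share a factor with $c$ while no single prime factor of $c$ divides all of them. To handle this one has to exploit that $\gcd(x_i,c)$ is constant along $\sigma$-cycles — immediate from $x_{\sigma(i)}\equiv u x_i\pmod c$ with $u$ a unit mod $c$ — so the cycle congruence only yields $u^{\ell}\equiv 1$ modulo $c/\gcd(x_{i_1},c)$, and then one must bring in the third coordinate: the values $\{x_{\tau(i)}\}$ also exhaust the same set of polynomials, and the relation $ax_i+bx_{\sigma(i)}+cx_{\tau(i)}=0$ should not be solvable unless some $\sigma$-cycle consists entirely of polynomials coprime to $c$ (equivalently, unless $\operatorname{ord}(u)$ already drops to an order attained on a single cycle, so that $N\ge\ell\ge m$ anyway). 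Pinning down this last step — ruling out the "fractured'' configuration using that the $z$-coordinates form a permutation of the same set — is where I expect the real work to be; everything else is the bookkeeping above.
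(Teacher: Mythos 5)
Your proposal follows essentially the same route as the paper's proof: reduce $ax_i+by_i+cz_i=0$ modulo $c$, use balancedness to realize multiplication by $u=-a/b$ as a permutation of the first coordinates, and bound the size of the balanced set below by the length of an orbit. Your extra bookkeeping is correct: the infinite-descent argument showing that no nonzero balanced set exists when $\gcd(ab,c)\neq 1$ is valid (and consistent with Lemma \ref{balanced-implies-criteria}), the cycle argument is valid, and the descent by common prime factors plus the cycle argument completely proves the statement whenever $c$ is a prime power --- in particular for $c$ irreducible, which is the only case the paper actually uses (Corollary \ref{ex:BigBalancedSetOfTuples} and Corollary \ref{L3(D)-bound}).

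However, as you acknowledge, you have not proved the proposition as stated. For composite $c$ your argument only yields, from a $\sigma$-cycle through an entry prime to $p$, the congruence $u^{\ell}\equiv 1 \pmod{p^{v_p(c)}}$, hence the bound $N\geq \max_{p\mid c}\operatorname{ord}_{p^{v_p(c)}}(u)$, whereas the proposition asserts $N\geq \operatorname{lcm}_{p \mid c}\operatorname{ord}_{p^{v_p(c)}}(u)=\operatorname{ord}_c(u)$. Ruling out the ``fractured'' configuration, in which every $\sigma$-cycle consists of entries sharing a factor with $c$, is exactly the missing step, and your closing sketch about exploiting the third coordinates is a plan, not an argument; so the proposal has a genuine gap relative to the statement. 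It is worth saying that this is precisely the delicate point in the paper's own proof as well: after normalizing so that the first coordinates generate the unit ideal, the paper chooses $t_{1k}$ merely not divisible by $c$, and for composite $c$ such a $t_{1k}$ need not be a unit modulo $c$; the orbit $\{\overline{(-a/b)^n t_{1k}}\}$ then has length equal to the order of $-a/b$ modulo $c/\gcd(t_{1k},c)$, which may be smaller than the order modulo $c$. So the subtlety you flag is real; your argument and the paper's coincide, and are complete, exactly in the irreducible (or prime-power) case needed for the paper's applications, while the general composite case would require an additional idea in both write-ups.
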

\begin{proof}
Let $T = \{(t_{1j}, t_{2j}, t_{3j})\}_{j=1}^m$ be any balanced multiset of triples with respect to $(a,b,c)$. Without loss of generality, suppose that the $t_{1j}$ ($j=1,2,...,m$) are coprime polynomials. Choose $k$ so that $t_{1k}$ is not divisible by $c$. 

For a polynomial $x \in \mathbb{F}_q[t]$, we will use the notation $\overline{x}$ to denote the image of $x$ under the mod-c quotient map $\mathbb{F}_q[t] \to \mathbb{F}_q[t]/c$. From the equation $a t_{1k} + b t_{2k} + ct_{3k} = 0$, we have $\overline{t_{2k}} = \overline{(-a/b)t_{1k}}$. By balancedness, this shows that there is some $j$ so that $\overline{t_{1j}} = \overline{(-a/b) t_{1k}}$.

Iterating this argument, we see that $\{\overline{(-a/b)^nt_{1k}}:n \in \mathbb{Z}^+\} \subset \{\overline{t_{1j}}\}_{j=1}^m$, and the result follows.
\end{proof} 


In the case where $c$ is irreducible and $-a/b$ is a generator for $(\bbF_q[t]/c)^*$, we get the following.

\begin{cor}\label{ex:BigBalancedSetOfTuples}
Let $(a,b,c) \in \bbF_q[t]^3$ be a Smyth triple. Suppose that $c$ is irreducible with $\deg(c)=d \geq \deg(b), \deg(a)$ and also that $(-a/b)$ is a generator for $(\bbF_q[t]/c)^*$. Then the smallest balanced multiset of triples with respect to $(a,b,c)$ has size $q^d-1$.
\end{cor}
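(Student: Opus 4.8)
The plan is to pin down the minimum as two matching bounds: the lower bound $q^d-1$ will come directly from Proposition~\ref{closed under mult by -a/b}, and the upper bound from the explicit balanced set produced in Section~\ref{proof section}. First the lower bound. Since $c$ is irreducible of degree $d$, the ring $\bbF_q[t]/(c)$ is the finite field $\bbF_{q^d}$, so its unit group is cyclic of order $q^d-1$. The hypothesis that $-a/b$ generates $(\bbF_q[t]/c)^*$ presupposes in particular that $c\nmid b$ (so that $-a/b$ is a well-defined unit modulo $c$) and forces the order of $-a/b$ in that group to be exactly $q^d-1$. Proposition~\ref{closed under mult by -a/b} then says that every balanced multiset of triples with respect to $(a,b,c)$ has size at least this order, i.e.\ at least $q^d-1$.

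For the upper bound I will exhibit a balanced multiset of size exactly $q^d-1$. Since $(a,b,c)$ is a Smyth triple of height $d=\deg c=\max(\deg a,\deg b,\deg c)$, Remark~\ref{1-factor-case} — that is, Proposition~\ref{prop:SmythTupleNumberOfSolutions} specialized to $n=3$ and $N=d$ — shows that the set $T_d$ of all solutions $(x_1,x_2,x_3)\in V_d^{\,3}$ to $ax_1+bx_2+cx_3=0$ has exactly $q^d$ elements and that in each of the three coordinate positions the multiset of values occurring is precisely one copy of $V_d$. In particular $0$ occurs exactly once in each coordinate; since the zero tuple $(0,0,0)$ lies in $T_d$ and accounts for an occurrence of $0$ in all three coordinates, it is the unique element of $T_d$ having any zero coordinate. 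Hence $T_d\setminus\{(0,0,0)\}$ is a collection of $q^d-1$ nonzero solutions in which each coordinate realizes the multiset $V_d\setminus\{0\}$; thus it is a balanced multiset of size $q^d-1$. Combining with the lower bound, the smallest balanced multiset of triples with respect to $(a,b,c)$ has size exactly $q^d-1$.

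Because both of the substantive ingredients (Propositions~\ref{prop:SmythTupleNumberOfSolutions} and \ref{closed under mult by -a/b}) are already in hand, there is no serious obstacle here; the two points needing a small amount of care are (i) observing that the ``generator'' hypothesis silently carries $c\nmid b$ and therefore makes the order of $-a/b$ equal to $|(\bbF_q[t]/c)^*|=q^d-1$, and (ii) verifying that deleting the zero tuple from $T_d$ leaves a genuinely balanced multiset, which works exactly because Proposition~\ref{prop:SmythTupleNumberOfSolutions} with $N=d$, $n=3$ gives precisely one occurrence of every value of $V_d$ in each coordinate.
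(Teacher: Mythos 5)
Your proposal is correct and follows essentially the same route as the paper: the upper bound via Proposition \ref{prop:SmythTupleNumberOfSolutions} with $N=d$, $n=3$ after deleting the zero triple, and the lower bound via Proposition \ref{closed under mult by -a/b}, using that a generator of $(\bbF_q[t]/c)^* \cong \bbF_{q^d}^*$ has order $q^d-1$. Your write-up simply makes explicit the small checks (e.g.\ that removing $(0,0,0)$ preserves balancedness) that the paper leaves implicit.
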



 \begin{proof}
 From Proposition \ref{prop:SmythTupleNumberOfSolutions}, setting $N = d$ and $n=3$, and removing the triples $(0,0,0)$, we obtain a balanced multiset (in fact set) of triples of size $q^d-1$. 
 
 On the other hand, Proposition \ref{closed under mult by -a/b} shows that this there cannot be a smaller balanced multiset.
 \end{proof}

\begin{cor} \label{L3(D)-bound}
$L_3(D) \geq q^D-1$.
\end{cor}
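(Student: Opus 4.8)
The plan is to deduce Corollary \ref{L3(D)-bound} almost immediately from the two results that precede it, namely Proposition \ref{forward} and Corollary \ref{ex:BigBalancedSetOfTuples}. Recall that $L_3(D)$ is defined (Definition \ref{L_n(D)}) as the minimal $L$ such that \emph{some} triple $(a_1,a_2,a_3) \in \mathbb{F}_q(t)^3$ of height $D$ has the property that every linear Galois relation with those coefficients has degree at least $L$. So to prove $L_3(D) \geq q^D - 1$, it suffices to exhibit one triple of height $D$ all of whose linear Galois relations have degree at least $q^D - 1$.

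First I would produce such a triple. Fix an irreducible polynomial $c \in \mathbb{F}_q[t]$ of degree $D$ for which $(\mathbb{F}_q[t]/c)^*$ is cyclic — it always is, being the multiplicative group of a finite field — and choose $a, b \in \mathbb{F}_q[t]$ with $\deg a, \deg b \leq D$ such that $-a/b$ is a generator of $(\mathbb{F}_q[t]/c)^*$ and such that $(a,b,c)$ is coprime and satisfies the absolute value criteria over $\mathbb{F}_q(t)$ (e.g. taking $\deg a = \deg b = \deg c = D$ so the max degree is attained at least twice, and ensuring $\gcd(a,b,c) = 1$; one also needs no prime dividing two of the three, which is automatic here once $c$ is irreducible and coprime to $a, b$). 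By Theorem \ref{smyth-conj-Fqt}, $(a,b,c)$ is then a Smyth triple, so Corollary \ref{ex:BigBalancedSetOfTuples} applies and tells us that the smallest balanced multiset of triples with respect to $(a,b,c)$ has size $q^D - 1$.

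Next I would invoke the contrapositive of Proposition \ref{forward}: if $\sum_{i=1}^3 a_i \gamma_i = 0$ is a linear Galois relation over $\mathbb{F}_q(t)$ with coefficients $(a,b,c)$ and $m = [\mathbb{F}_q(t)(\gamma):\mathbb{F}_q(t)]$ is its degree, then there is a balanced multiset with respect to $(a,b,c)$ of size $m$. Since every such balanced multiset has size at least $q^D - 1$, we conclude $m \geq q^D - 1$. As this holds for every linear Galois relation with coefficients $(a,b,c)$, the triple $(a,b,c)$ witnesses $L_3(D) \geq q^D - 1$, completing the proof.

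I do not expect any serious obstacle here; the corollary is essentially a bookkeeping combination of earlier results. The only point requiring a moment's care is the existence of the triple $(a,b,c)$: one must simultaneously arrange that $c$ is irreducible of degree $D$, that $-a/b$ generates $(\mathbb{F}_q[t]/c)^*$, and that $(a,b,c)$ is a coprime triple satisfying the absolute value criteria. This is a routine matter — lift a generator of the cyclic group $(\mathbb{F}_q[t]/c)^*$ to a ratio of polynomials of degree at most $D$, clear denominators, and if necessary scale or perturb to fix coprimality and to make the top degree attained twice — but it is the one spot where the argument needs an explicit construction rather than a pure citation.
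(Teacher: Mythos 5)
Your proposal is correct and follows essentially the same route as the paper: combine Proposition \ref{forward} with Corollary \ref{ex:BigBalancedSetOfTuples}, and construct an explicit triple with $c$ irreducible of degree $D$ and $-a/b$ a generator of $(\mathbb{F}_q[t]/c)^*$. The paper settles the one construction detail you leave to a ``perturb if necessary'' step by taking $b,c$ distinct irreducibles of degree $D$ and $a$ the reduction of $-gb$ modulo $c$ (so $\deg a < D$), which makes pairwise coprimality and the twice-attained top degree immediate.
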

\begin{proof}
By Proposition \ref{forward}, it suffices to show that for all $D$, there is some triple $(a,b,c)$ with respect to which every balanced multiset of tuples has size at least $q^D-1$. In other words, we are reduced to constructing a triple satisfying the hypotheses of Corollary $\ref{ex:BigBalancedSetOfTuples}$ with $d=D$.

 Let $b,c$ be distinct irreducible polynomials of degree $D$. Let $g \in (\mathbb{F}_q[t]/c)^*$ be any multiplicative generator and let $a$ be the unique polynomial of degree $\leq D-1$ such that $a = -gb$ (mod c). Then $a,b,c$ are pairwise coprime and their maximum degree is achieved twice, so they are a Smyth triple by Theorem \ref{smyth-conj-Fqt}. Thus they satisfy the hypotheses of Corollary $\ref{ex:BigBalancedSetOfTuples}$.
\end{proof}


\section{Galois Group of Constructed Linear Galois Relations} \label{gal-grp}

In this section we show that the conjugates in our construction for Smyth triples from Section \ref{proof section} can be chosen to have the full symmetric group as their Galois group.


We begin with a lemma.
\begin{lemma}
\label{lem:NormalBasis}
Let $L$ be an $S_d$-extension of $K: = \mathbb{F}_q(t)$. There exist conjugates $\alpha_1, \dots, \alpha_d$ with $\sum_{i=1}^d \alpha_i \neq 0$ such that $L=K(\alpha_1, \dots, \alpha_d)$. 
\end{lemma}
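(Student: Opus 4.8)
The statement asks for a generating set of $L/K$ by Galois conjugates $\alpha_1,\dots,\alpha_d$ of a single element, with the extra nonvanishing condition $\sum \alpha_i \neq 0$. The natural starting point is the normal basis theorem: since $L/K$ is Galois with group $S_d$, there is an element $\theta \in L$ whose conjugates $\{\sigma\theta : \sigma \in S_d\}$ form a $K$-basis of $L$. But these are $d!$ conjugates, not $d$, so I cannot use $\theta$ directly. Instead I want an element whose orbit under $S_d$ has size exactly $d$, i.e. an element fixed by a point-stabilizer $S_{d-1} \subset S_d$, and which still generates $L$.

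The plan is as follows. First I would fix a normal basis element $\theta$, so $L = \bigoplus_{\sigma \in S_d} K\sigma\theta$. Let $H = S_{d-1}$ be the stabilizer of the letter $d$, and consider the trace $\beta := \sum_{\tau \in H} \tau\theta$, an element of the fixed field $L^H$, which has degree $d$ over $K$. Its $S_d$-conjugates are $\alpha_i := \sum_{\tau \in H_i} \tau\theta$, where $H_i$ is the stabilizer of the letter $i$; there are exactly $d$ of them (one per coset $S_d/H$). I then need two things: (a) that $K(\alpha_1,\dots,\alpha_d) = L$, and (b) that $\sum_{i=1}^d \alpha_i \neq 0$. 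For (b), note $\sum_{i=1}^d \alpha_i = \sum_{i=1}^d \sum_{\tau \text{ fixes } i} \tau\theta = \sum_{\sigma \in S_d} |\mathrm{Fix}(\sigma)|\,\sigma\theta$, and since the $\sigma\theta$ are $K$-linearly independent and the coefficient $|\mathrm{Fix}(\mathrm{id})| = d \neq 0$ in $\mathbb{F}_q$... except that this is false when $p \mid d$. So (b) genuinely requires care and is, I expect, the main obstacle.

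To handle (b) robustly I would not commit to the trace construction but argue more flexibly: the set of elements $\theta \in L$ for which $\{\sigma\theta\}$ is a normal basis is Zariski-dense (the complement is a proper closed subvariety — this is the standard proof of the normal basis theorem over infinite fields, and $\mathbb{F}_q(t)$ is infinite), and for each such $\theta$ the quantities "$L \neq K(\alpha_1(\theta),\dots,\alpha_d(\theta))$" and "$\sum_i \alpha_i(\theta) = 0$" each cut out a proper closed condition on $\theta$, so a generic $\theta$ avoids both. The non-generation condition is closed because generation is a union over proper subfields, finitely many, each a proper subspace; the sum condition is a single linear equation which is not identically zero as a function of $\theta$ (choosing $\theta$ a normal basis element, the sum is $\sum_\sigma c_\sigma \sigma\theta$ with not all $c_\sigma$ zero — indeed $c_\sigma = |\mathrm{Fix}(\sigma)|$ and, e.g., a transposition has $|\mathrm{Fix}| = d-2$, a $d$-cycle has $|\mathrm{Fix}|=0$; for $d \geq 3$ not all of $d, d-2, 0, \dots$ vanish mod $p$, so the linear functional $\theta \mapsto \sum_i \alpha_i(\theta)$ is nonzero). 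Intersecting finitely many proper closed conditions over the infinite field $\mathbb{F}_q(t)$ leaves a nonempty set, which supplies the desired $\theta$ and hence the $\alpha_i$.

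The one remaining check is that genericity of $\theta$ forces $K(\alpha_1,\dots,\alpha_d) = L$: the field $M := K(\alpha_1,\dots,\alpha_d)$ is the fixed field of $N := \bigcap_i \mathrm{Stab}_G(\alpha_i)$, and since the $\alpha_i$ are permuted transitively by $S_d$ in the natural action, $N$ is a normal subgroup of $S_d$ contained in... well, one must rule out $N = S_d$ (which would mean all $\alpha_i$ equal, forcing $\theta$ into a proper subspace) and, for $d \geq 5$, the only other proper normal subgroup $A_d$ (also a proper-subspace condition on $\theta$); for $d \leq 4$ one checks the finitely many normal subgroups directly. Each "bad" outcome is again a proper closed condition on $\theta$, so it is avoided generically. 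I would present the argument in this genericity-of-$\theta$ style rather than via an explicit formula, since that cleanly sidesteps the characteristic-$p$ issues that make an explicit trace element fail.
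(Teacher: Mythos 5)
There is a genuine gap, and it sits at the very start of your construction: the $d$ elements you write down are not Galois conjugates of one another. The conjugates of $\beta=\sum_{\tau\in H}\tau\theta$ (with $H=H_d$ the stabilizer of the letter $d$) are indexed by the \emph{left cosets} $\sigma H$, i.e.\ they are $\sum_{\rho:\rho(d)=j}\rho\theta$ for $j=1,\dots,d$; they are not the elements $\tilde\alpha_i=\sum_{\tau\in H_i}\tau\theta$ obtained by summing over the conjugate subgroups $H_i=\mathrm{Stab}(i)$. Indeed, you restrict to $\theta$ a normal basis element, so the $\sigma\theta$ are $K$-linearly independent; then $\sigma\cdot\tilde\alpha_i=\sum_{\rho\in\sigma H_i}\rho\theta$, and a left coset $\sigma H_i$ can equal a point stabilizer $H_j$ only when $\sigma\in H_i$ (a coset contains the identity only if it is the subgroup itself), forcing $j=i$. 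So for $d\ge 2$ the set $\{\tilde\alpha_1,\dots,\tilde\alpha_d\}$ is not even Galois-stable: $S_d$ does not permute these elements, the claim that they are ``permuted transitively'' (which you use to make $N=\bigcap_i\mathrm{Stab}(\tilde\alpha_i)$ normal) fails, and the lemma's requirement that the $\alpha_i$ be conjugates is never met by your construction. The characteristic-$p$ analysis of $\sum_\sigma|\mathrm{Fix}(\sigma)|\,\sigma\theta$ is therefore an analysis of the wrong linear functional, and the genericity scaffolding built on it does not prove the statement.

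The repair is easy and in fact removes the need for genericity altogether: with $\theta$ a normal basis element, set $\alpha_j:=\sum_{\rho:\rho(d)=j}\rho\theta$. Then $\sigma\alpha_j=\alpha_{\sigma(j)}$, linear independence of the $\sigma\theta$ gives $\mathrm{Stab}(\alpha_j)=H_j$ exactly, so the $\alpha_j$ are $d$ distinct conjugates of degree $d$; since $\bigcap_j H_j=\{1\}$, Galois correspondence gives $K(\alpha_1,\dots,\alpha_d)=L$; and $\sum_j\alpha_j=\sum_{\sigma\in S_d}\sigma\theta\neq 0$ because each $\sigma$ lies in exactly one coset, so the coefficient vector is all $1$'s --- the worry about $p\mid d$ (or about fixed-point counts) evaporates. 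Note this corrected route is genuinely different from the paper's proof, which takes an arbitrary degree-$d$ element whose conjugates generate $L$, uses Newton's identities and separability to find some nonvanishing power sum $\sum_i\alpha_i^k$, and then perturbs to $\alpha+c\alpha^k$ for a suitable $c\in K$; your (fixed) normal-basis argument is arguably cleaner, but as submitted the proof does not establish the lemma.
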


\begin{proof}




Choose $\alpha \in L$ of degree $d$ with conjugates $\alpha=\alpha_1,...,\alpha_d$ such that $L = K(\alpha_1,...,\alpha_d)$. We will show that $\sum_{i=1}^d \alpha_i \neq 0$ for some $1 \leq k \leq d$. Let $p_k=\sum_{i=1}^d x_i^k$ be the $k^{th}$ power sum polynomial. Suppose for contradiction that $p_k(\alpha_1, \dots, \alpha_d)=0$ for all $k \geq 1$. Let $e_i$ be the $i^{th}$ elementary symmetric polynomial and let $f(x)=\sum_{i=0}^d a_i x^i$ be the minimal polynomial of $\alpha_1$. We have that $e_k(\alpha_1, \dots, \alpha_d) = (-1)^k a_{d-k}$ for $1 \leq k \leq d$. 

 
Since we assume $p_i(\alpha_1, \dots, \alpha_d)=0$ for all $1 \leq i \leq d$, the Newton identities give $$(-1)^{k-1}ke_k(\alpha_1, \dots, \alpha_d)=0$$ So, $e_k(\alpha_1, \dots, \alpha_d)=0$ for all $p \nmid k$. Thus, the minimal polynomial of $\alpha_1$ is inseparable, contradicting the fact that $\alpha$ generates a separable extension. So, there is some $k$ for which $\sum_{i=1}^d \alpha_i^k \neq 0$. If $\sum_{i=1}^d \alpha_i = 0$, then $k > 1$ and the $\alpha_i^k$ are not guaranteed to generate $L$. But we can find some nonzero $c \in K$ for which $\alpha_1 + c \alpha_1^k$ and its conjugates generate $L$. Indeed, for all but finitely many $c \in K$, the elements $\alpha_i + c \alpha_i^k$ are pairwise distinct, in which case no nontrivial $K$-automorphisms of $L$ fix the field $K(\alpha_i + c \alpha_i^k)_{i=1}^d$. Since $\sum_{i=1}^d(\alpha_1 + c \alpha_1^k) = c\sum_{i=1}^d\alpha_1^k \neq 0$ for nonzero $c$, we are done.

\end{proof}

\begin{theorem} \label{Gal-relation-from-1-factor}
Let $K = \mathbb{F}_q(t)$ and fix an integer $n \geq 3$. Let $(a_1, \dots, a_n) \in \mathbb{F}_q[t]^n$ be a coprime tuple. Suppose that there is a 1-factor $T = \{(x_{i_1},...,x_{i_n}) \in K^n\}_{i=1}^m$ with respect to $(a_1,...,a_n)$ of size $m$. Then there exist Galois conjugates $\gamma_1, \dots, \gamma_d$ such that $\sum_{i=1}^n a_i \gamma_i=0$ and $\Gal(K(\gamma_1)/K)=S_m$.
\end{theorem}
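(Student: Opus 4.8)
The idea is to start from the $1$-factor $T=\{(x_{i_1},\dots,x_{i_n})\}_{i=1}^m$ and build a linear Galois relation by substituting formal variables with the conjugates of a suitably chosen primitive element of an $S_m$-extension. Since $T$ is a $1$-factor, the first-coordinate multiset $\{x_{i_1}\}_{i=1}^m$ is actually a set of $m$ distinct elements of $K$; call them $y_1,\dots,y_m$. For each coordinate $j$, balancedness says $\{x_{i_j}\}_{i=1}^m$ is a permutation of $\{y_1,\dots,y_m\}$, so there is a permutation $\sigma_j\in S_m$ with $x_{i_j}=y_{\sigma_j(i)}$ (and we may take $\sigma_1=\mathrm{id}$). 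The relation $\sum_{i=1}^n a_i x_{i_j}=0$ then reads $\sum_{i=1}^n a_i y_{\sigma_j(i)}=0$ for every $j$.

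Now invoke the fact (recorded in the preliminaries, from \cite{Ber04}) that $\mathbb{F}_q(t)$ has an $S_m$-extension $L/K$, together with Lemma \ref{lem:NormalBasis}, to get conjugates $\alpha_1,\dots,\alpha_m$ generating $L$ with $\sum_{i=1}^m\alpha_i\neq 0$. Set $s:=\sum_{i=1}^m\alpha_i\in K^*$ and define $\gamma_j:=\sum_{i=1}^m y_i\,\alpha_{\sigma_j^{-1}(i)}$ for $j=1,\dots,n$; equivalently $\gamma_j=\sum_{i=1}^m y_{\sigma_j(i)}\alpha_i=\sum_{i=1}^m x_{j_i}\alpha_i$ — wait, more carefully: $\gamma_j = \sum_{i=1}^m x_{i_j}\,\alpha_i$. (The substitution in Proposition \ref{equivalent-conditions}/\ref{forward} is exactly this: replace the formal basis vector $\alpha_i$ of $V_{Perm}$ by the conjugate $\alpha_i$.) Then $\sum_{j=1}^n a_j\gamma_j=\sum_{i=1}^m\alpha_i\big(\sum_{j=1}^n a_j x_{i_j}\big)=0$, so we have a linear Galois relation — provided the $\gamma_j$ are genuinely Galois conjugates, which is the crux.

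**The main obstacle.** The hard part is arranging that $\gamma_1,\dots,\gamma_n$ are conjugate over $K$ \emph{and} that $K(\gamma_1)=L$ (so the Galois group is all of $S_m$). First, $G=\mathrm{Gal}(L/K)=S_m$ acts on the $\alpha_i$ by permutation; the element $\tau$ sending $\alpha_i\mapsto\alpha_{\sigma_j(i)}$ (for fixed $j$) exists in $S_m$ and sends $\gamma_1=\sum_i x_{i_1}\alpha_i=\sum_i y_i\alpha_i$ to $\sum_i y_i\alpha_{\sigma_j(i)}=\sum_i x_{i_j}\alpha_i=\gamma_j$ — since $\sigma_1=\mathrm{id}$ gives $x_{i_1}=y_i$. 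So all the $\gamma_j$ lie in the $G$-orbit of $\gamma_1$, hence are conjugate to $\gamma_1$ over $K$ (though possibly with repetitions). It remains to show $K(\gamma_1)=L$, i.e. $\mathrm{Stab}_G(\gamma_1)=1$. This is where I expect the genuine difficulty: an arbitrary $K$-linear combination $\sum_i y_i\alpha_i$ of a normal basis need not have trivial stabilizer. The fix is the standard ``generic choice'' trick already used in Lemma \ref{lem:NormalBasis}: we have freedom to replace the original $\alpha_i$ by $\beta_i:=\alpha_i+c\,\alpha_i^k$ for suitable $c$ and $k$ (or more simply, since the $y_i$ are \emph{distinct}, argue directly). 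Indeed, with the $y_i$ distinct, an element $\pi\in S_m$ fixes $\gamma_1=\sum_i y_i\alpha_i$ iff $\sum_i y_i\alpha_{\pi(i)}=\sum_i y_i\alpha_i$, i.e. $\sum_i (y_{\pi^{-1}(i)}-y_i)\alpha_i=0$; since a normal basis is a $K$-basis, this forces $y_{\pi^{-1}(i)}=y_i$ for all $i$, and distinctness of the $y_i$ then forces $\pi=\mathrm{id}$. Hence $\mathrm{Stab}_G(\gamma_1)=1$, so $[K(\gamma_1):K]=|G|=m!$ and $K(\gamma_1)=L$, giving $\mathrm{Gal}(K(\gamma_1)/K)=S_m$. (Here the role of Lemma \ref{lem:NormalBasis} is not strictly the stabilizer computation but rather ensuring we may take a normal basis at all over $K$, i.e. that an $S_m$-extension exists with the needed generation property; the nonvanishing of $\sum\alpha_i$ is a harmless bonus that is not actually needed for this argument but matches the lemma's statement.)

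**Assembling the proof.** So the steps, in order: (i) unpack the $1$-factor hypothesis to extract the distinct values $y_1,\dots,y_m$ and permutations $\sigma_j$ with $\sigma_1=\mathrm{id}$; (ii) invoke existence of an $S_m$-extension $L/K$ and a normal basis $\alpha_1,\dots,\alpha_m$ (Lemma \ref{lem:NormalBasis}); (iii) define $\gamma_j:=\sum_{i=1}^m x_{i_j}\alpha_i$ and verify $\sum_j a_j\gamma_j=0$ by swapping the order of summation; (iv) exhibit explicit elements of $S_m=G$ carrying $\gamma_1$ to each $\gamma_j$, so the $\gamma_j$ are Galois conjugates; (v) compute $\mathrm{Stab}_G(\gamma_1)=1$ using the distinctness of the $y_i$ and linear independence of a normal basis, concluding $K(\gamma_1)=L$ and $\mathrm{Gal}(K(\gamma_1)/K)=S_m$. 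The only subtlety to watch is step (v) — making sure the linear combination defining $\gamma_1$ really has trivial stabilizer — which is handled cleanly precisely because the $1$-factor (as opposed to a general balanced multiset) guarantees distinct first coordinates.
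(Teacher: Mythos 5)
Your overall architecture is the same as the paper's: take the $m$ distinct first coordinates of the $1$-factor, take conjugates $\alpha_1,\dots,\alpha_m$ generating an $S_m$-extension $L/K$ via Lemma \ref{lem:NormalBasis}, set $\gamma_j=\sum_i x_{i_j}\alpha_i$, check $\sum_j a_j\gamma_j=0$ by swapping sums, note that every permutation of the $\alpha_i$ is realized by some element of $\Gal(L/K)$ so the $\gamma_j$ are conjugates of $\gamma_1$, and finish by showing $\gamma_1$ has trivial stabilizer. Steps (i)--(iv) are fine.

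The gap is in step (v). You deduce from $\sum_i\bigl(y_{\pi^{-1}(i)}-y_i\bigr)\alpha_i=0$ that all coefficients vanish ``since a normal basis is a $K$-basis.'' But the $\alpha_i$ produced by Lemma \ref{lem:NormalBasis} are not a normal basis of $L/K$ and cannot be a $K$-basis: they are the $m$ conjugates of a single degree-$m$ element inside an extension of degree $m!$. Conjugates of a degree-$m$ element can satisfy nontrivial $K$-linear relations (the trace relation $\sum_i\alpha_i=0$ being the obvious one), so linear independence is exactly what needs proving, not something you get for free. The paper closes this gap with two ingredients: a cited result that in an $S_m$-extension any vanishing $K$-linear combination of the conjugates must have all coefficients \emph{equal}, together with the conclusion $\sum_i\alpha_i\neq 0$ of Lemma \ref{lem:NormalBasis}, which then forces the common coefficient value to be $0$; only then does distinctness of the $y_i$ give $\pi=\mathrm{id}$. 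Your parenthetical claim that the nonvanishing of $\sum_i\alpha_i$ is ``a harmless bonus that is not actually needed'' is therefore wrong, and not just cosmetically: in characteristic $p$ with $p\mid m$, take $m=p$, $y_i=i-1\in\mathbb{F}_p\subset K$ (distinct), and $\pi$ the $p$-cycle $i\mapsto i+1$; then $y_{\pi^{-1}(i)}-y_i=-1$ for all $i$, so if the conjugates had $\sum_i\alpha_i=0$ the element $\pi$ would fix $\gamma_1=\sum_i y_i\alpha_i$ even though the $y_i$ are distinct, and the conclusion $K(\gamma_1)=L$ would fail. So the trace condition (and the equal-coefficients lemma that the paper invokes) is essential, and your proof as written does not establish the trivial-stabilizer step.
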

\begin{proof}
By Lemma \ref{lem:NormalBasis}, we obtain $\alpha_1, \dots, \alpha_d$ such that $K(\alpha_1, \dots, \alpha_d)$ is an $S_d$-extension of $K$ and $\sum_{i=1}^m \alpha_i \neq 0$. Define $\gamma_1:=\sum_{j=1}^m v_{1,j} \alpha_j$. Note that since $S$ is a 1-factor, the entries $v_{1,j}$ $(j=1,2,...,m)$ are distinct. We will write $v_j$ for $v_{1,j}$. If $K(\gamma_1)$ is a proper subextension of $L$, then it must be fixed by a nontrivial element $\sigma$ of $S_m$. So suppose that $$\sum_{j=1}^m v_{j} \alpha_j=\gamma_1=\sigma(\gamma_1)=\sum_{j=1}^m v_{\sigma^{-1}(j)} \alpha_j.$$ Then $\sum_{j=1}^m(v_j-v_{\sigma^{-1}(j)})\alpha_j=0$. By \cite[Thm 13.4.2]{Smy86}, which is presented as a lemma about $\bbQ$ but is valid for $K$ as well, the coefficients $v_j-v_{\sigma^{-1}(j)}$ must all be equal. Then for $c := v_1 - v_{\sigma(1)}$, we have $c\sum_{j=1}^m \alpha_j = 0$. By assumption, $\sum_{j=1}^m \alpha_j \neq 0$, so $c=0$. Therefore $\sigma = 1$, and we have proved that $\gamma_1$ is not contained in a proper subextension of $K(\gamma_1,...,\gamma_d)$.

\end{proof}

\section{A Heuristic for the Existence of a Linear Galois Relation with Prescribed Galois Group} \label{section -- heuristic}
We just saw in Section \ref{gal-grp} that the conjugates appearing in the linear Galois relations we constructed for Smyth triples $(a,b,c) \in \mathbb{F}_q[t]^3$ can be chosen to generate an $S_{q^d}$-extension of $\mathbb{F}_q(t)$ where $d$ is the height of $(a,b,c)$. In this section, we consider which Galois groups we expect to arise in this context.

\begin{definition} \label{Smyth-G-tuple}
If $(a_1,...,a_n)$ is a Smyth tuple, $G \subset S_m$ is a finite group, and there exist Galois conjugates $\gamma_1,...,\gamma_n$ over $K$ such that $\sum_{i=1}^n a_i \gamma_i = 0$ and $\Gal(K(\gamma_1)/K) \cong G$, then we say $(a_1,...,a_n)$ is a Smyth $G$-tuple.
\end{definition}

\begin{question} \label{ques}
Suppose that $(a_1,...,a_n) \in \mathbb{F}_q[t]^n$ is a Smyth tuple. For which $m >0$ and transitive subgroups $G \subset S_m$ is $(a_1,...,a_n)$ a Smyth $G$-tuple? 
\end{question}

Heuristic \ref{heuristic}, which depends only on $|G|$ and $m$, will correctly predict that Smyth tuples $(a_1,...,a_n)$ are always $S_{q^N}$-tuples for sufficiently large $N$. (Recall that Remark \ref{1-factor-case} and Theorem \ref{Gal-relation-from-1-factor} together show that every Smyth triple is an $S_{q^N}$-triple for sufficiently large $N$.)

By Proposition $\ref{equivalent-conditions}$, a coprime $n$-tuple $(a_1,...,a_n) \in \mathbb{F}_q[t]^n$ of height $d$ is a Smyth tuple if and only if there exist permutation matrices $X_1,...,X_n$ such that $\det(\sum_{i=1}^n a_i X_i) = 0$. Our approach so far has been to look for balanced multisets, which is essentially asking, given $(a_1,...,a_n)$, whether or not there exists a vector $v \in \mathbb{F}_q[t]^n$ and permutation matrices $X_1,..X_n$ such that $(\sum_{i=1}^n a_i X_i)v =0$. We now shift our perspective slightly by fixing both the tuple $(a_1,...,a_n)$ and a vector $v$, and asking whether or not there exist such permutation matrices in $G$. Since $v$ will have no repeated entries, the heuristic is suited only to detect 1-factors, as opposed to a general balanced multiset.

In particular, we will fix a vector $v_N \in \mathbb{F}_q[t]^{q^N}$ whose coordinates are all the elements of $V_N$ in some order, each occurring exactly once. We have already seen that for any Smyth triple $(a_1,a_2,a_3)$, there do exist permuation matrices $X_1, X_2, X_3 \in S_{q^N}$ such that $(a_1X_1 + a_2X_2 + a_3X_3)v_N = 0$ (Remark \ref{1-factor-case}). Our heuristic, in the case of triples, suggests an answer to the question of whether this remains true if we insist on choosing $X_1,X_2,X_3$ from some subgroup $G \subset S_{q^N}$. 


\begin{Heuristic} \label{heuristic}
Fix $N$ and enumerate the set of polynomials over $\mathbb{F}_q$ of degree $< N$: $V_N = \{f_1,f_2,...,f_{q^N}\}$, and fix a vector $v = (f_i)_{i=1}^{q^N} \in \mathbb{F}_q[t]^{q^N}$ whose coordinates are all the elements of $V_N$, each occurring exactly once. Let $X_n = I$ be the $q^N \times q^N$ identity matrix. Then choose \textit{random} permutations $X_1,...,X_{n-1} \in G \subset S_{q^N}$ and assume that for each $j: 1 \leq j \leq q^N$, the sum $\sum_{i=1}^n a_i v_{X_i^{-1}(j)}$ takes values in $V_{N+d}$ uniformly and independently at random. 
\end{Heuristic}

\begin{remark}
A shortcoming of Heuristic \ref{heuristic} is that it doesn't see the (necessary) ``local'' conditions -- it would predict that \textit{any} $n$-tuple $(a_1,...,a_n)$, even those which are not Smyth tuples, is a Smyth $G$-tuple for large enough $G$. 
\end{remark}

\begin{remark}
The reason for fixing $X_n = I$ is that if $\sum_{i=1}^n a_i X_i v = 0$, then also $\sum_{i=1}^n a_i (XX_i) v = 0$ for every $X \in G$. This means that we are really concerned with $G$-orbits of $n$-tuples of $G$, and we identify these with $(n-1)$-tuples of elements of $G$ by fixing the last coordinate.
\end{remark}

The result of this assumption is that we have $\sum_{i=1}^n (a_i X_i) v = 0$ with probability $(q^{-(N+d)})^{q^N}$. On the other hand, we have $|G|^{n-1}$ choices for $X_1,...,X_{n-1}$, so, according to this model, the probability of failing to find $X_1,...,X_n$ such that $\sum_{i=1}^n (a_i X_i) v = 0$ is $p_N:= (1-q^{-(N+d)q^N})^{|G|^{(n-1)}}$.

This probability goes to $0$ as $N \to \infty$ for $|G| = \omega_{q,d}(q^{(N+d)q^N/(n-1)})$ (Proposition \ref{l'hopital}). (We are using the standard Landau asymptotic notation here, in which $f(n) = \omega(g(n))$ means that $\lim_{n \to \infty} \frac{g(n)}{f(n)} = 0$.) 

Notably, if we plug in $n=2$, we see that Heuristic \ref{heuristic} correctly does \textit{ not } predict that a random pair $(a_1,a_2)$ is a Smyth pair; indeed, one can check that $(a_1,a_2) \in \mathbb{Z}^2$ is a Smyth pair if and only if $a_1 = \pm a_2$.

\begin{prop} \label{l'hopital}
Write $|G_N| = c_N(q^{(N+d)q^N})^{1/(n-1)}$ and suppose that $\lim_{N \to \infty} c_N = \infty$. Then $\lim_{N \to \infty} p_N = 0$.
\end{prop}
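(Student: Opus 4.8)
The plan is to reduce the statement to an elementary one-variable limit. First I would introduce the abbreviation $x_N := q^{-(N+d)q^N}$, so that $x_N \in (0,1)$ for every $N$ and $x_N \to 0$ as $N \to \infty$. With this notation the hypothesis $|G_N| = c_N\,(q^{(N+d)q^N})^{1/(n-1)}$ becomes $|G_N|^{n-1} = c_N^{\,n-1}\, x_N^{-1}$, and hence
\[
p_N = (1 - x_N)^{\,c_N^{\,n-1}/x_N}.
\]
Since $0 < x_N < 1$ and the exponent $c_N^{\,n-1}/x_N$ is positive (for all but finitely many $N$, using $c_N \to \infty$), we have $p_N \in (0,1)$, so one may pass to logarithms and obtain
\[
\log p_N = \frac{c_N^{\,n-1}}{x_N}\,\log(1 - x_N).
\]

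From here there are two essentially equivalent ways to conclude. The cleanest is to invoke the elementary inequality $\log(1-x) \le -x$, valid for $x \in [0,1)$: since $x_N > 0$ this gives $\log p_N \le -c_N^{\,n-1}$, and because $n - 1 \ge 1$ and $c_N \to \infty$ by hypothesis, the right-hand side tends to $-\infty$, so $\log p_N \to -\infty$ and therefore $p_N \to 0$. Alternatively --- and this is presumably the reason for the name of the proposition --- one computes $\lim_{x \to 0^+} \frac{\log(1-x)}{x} = -1$ by L'H\^opital's rule, whence $\log p_N = c_N^{\,n-1}\cdot\frac{\log(1-x_N)}{x_N} \sim -c_N^{\,n-1} \to -\infty$, with the same conclusion.

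I do not expect any genuine obstacle here; the argument is a routine asymptotic computation. The only points that warrant a moment's care are (i) verifying that $p_N$ really lies in $(0,1)$ so that taking its logarithm is legitimate --- this follows from $0 < x_N < 1$ together with positivity of the exponent, after discarding the finitely many $N$ with $c_N \le 0$, which are irrelevant to the limit --- and (ii) noting that the exponent $n-1$ is at least $1$, which holds since $n \ge 3$ throughout this section. With these observations in place the limit $p_N \to 0$ follows immediately.
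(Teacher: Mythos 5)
Your proof is correct, and it is essentially the intended argument: the paper does not actually print a proof of this proposition, but its label indicates the expected route via L'H\^opital's rule applied to $\log(1-x)/x$, which is exactly your second variant; your first variant, using the elementary bound $\log(1-x) \le -x$ to get $\log p_N \le -c_N^{\,n-1} \to -\infty$, is if anything cleaner. One small simplification: since $|G_N|$ is the order of a group, $c_N > 0$ for every $N$ automatically, so there is no need to discard finitely many indices before taking logarithms.
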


By Stirling's formula, Heuristic \ref{heuristic} suggests an affirmative answer to Question \ref{ques} for $G = S_{q^N}$ when $n \geq 3$, in accordance with the fact that we found linear Galois relations among triples of conjugates which generate $S_{q^N}$-extensions.  It also predicts an affirmative answer for the alternating groups $A_{q^N}$, but predicts negative answers for the cyclic group of order $q^N$ and the dihedral group of order $2q^N$.








\section{Smyth's Conjecture Over $\mathbb{Q}$} \label{smyth-over-Q}

In this brief section we note that some of our results from Section \ref{deg-of-gal-conj} can be easily translated from $\mathbb{F}_q(t)$ over to $\mathbb{Q}$. In particular, we have the following analogues of Proposition \ref{closed under mult by -a/b}, Corollary \ref{ex:BigBalancedSetOfTuples}, and Corollary \ref{L3(D)-bound}.

\begin{prop}
\label{ex:BigBalancedSetOfTuplesQ}
If $(a,b,c) \in \bbZ^3$ then any balanced multiset of triples with respect to $(a,b,c)$ (if one exists) has size at least the order of $-a/b$ in $(\mathbb{Z}/c\mathbb{Z})^*$.
\end{prop}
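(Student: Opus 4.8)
The plan is to translate the proof of Proposition \ref{closed under mult by -a/b} verbatim from $\mathbb{F}_q(t)$ to $\mathbb{Q}$, since the argument there used only the ring structure of $\mathbb{F}_q[t]$ and the fact that it is a PID with a reduction-mod-$c$ map. So first I would reduce to a coprime triple: by the scaling remark in the introduction, a balanced multiset with respect to $(a,b,c)$ exists if and only if one exists with respect to any nonzero scalar multiple, and the order of $-a/b$ in $(\mathbb{Z}/c\mathbb{Z})^*$ is likewise unchanged by clearing denominators and common factors. (One should note that $-a/b$ only makes sense in $(\mathbb{Z}/c\mathbb{Z})^*$ when $\gcd(b,c)=1$, and if $\gcd(b,c)>1$ the quantity ``order of $-a/b$'' should be read as $1$ or the statement as vacuous; I would add a parenthetical to that effect, or simply restrict to the case where $b$ is a unit mod $c$, which is the only case where the conclusion has content.)

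Next, assume $(a,b,c) \in \mathbb{Z}^3$ is coprime and $\gcd(b,c)=1$, and let $T = \{(t_{1j},t_{2j},t_{3j})\}_{j=1}^m$ be a balanced multiset of triples with respect to $(a,b,c)$. Exactly as in the function field case, I would first normalize so that the first coordinates $t_{1j}$ have $\gcd$ equal to $1$ (dividing every triple in $T$ by the common content of the first column still leaves a balanced multiset, since balancedness is a column-wise condition and the equation $a x_1 + b x_2 + c x_3 = 0$ is homogeneous). Then there is some index $k$ with $c \nmid t_{1k}$. From $a t_{1k} + b t_{2k} + c t_{3k} = 0$ we get $\overline{t_{2k}} = \overline{(-a/b)\, t_{1k}}$ in $\mathbb{Z}/c\mathbb{Z}$, where the bar denotes reduction mod $c$ and $b$ is invertible mod $c$ by hypothesis. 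By balancedness, the multiset of second coordinates equals the multiset of first coordinates, so some first coordinate $t_{1j}$ satisfies $\overline{t_{1j}} = \overline{(-a/b)\, t_{1k}}$. Iterating, $\{\overline{(-a/b)^n\, t_{1k}} : n \in \mathbb{Z}^+\} \subseteq \{\overline{t_{1j}}\}_{j=1}^m$, and since $\overline{t_{1k}}$ is a unit mod $c$ (as $c \nmid t_{1k}$ — here I should be a little careful, since $c \nmid t_{1k}$ does not by itself make $t_{1k}$ a unit mod $c$; the cleaner route is to observe that multiplication by the unit $-a/b$ permutes the orbit, so the orbit $\{\overline{(-a/b)^n\, t_{1k}}\}_{n \geq 0}$ has exactly $\mathrm{ord}(-a/b)$ elements regardless of whether $t_{1k}$ is a unit), the set on the left has cardinality equal to the order of $-a/b$ in $(\mathbb{Z}/c\mathbb{Z})^*$. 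Hence $m \geq \mathrm{ord}(-a/b)$.

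The main obstacle is essentially bookkeeping rather than mathematics: making sure the normalization that lets me pass to a coprime triple and to a first column with trivial content is stated correctly over $\mathbb{Z}$ (where ``coprime'' is $\gcd = 1$), and handling the degenerate case $\gcd(b,c) \neq 1$ gracefully so the statement is not vacuously false. Since $\mathbb{Z}$ and $\mathbb{F}_q[t]$ are both PIDs with the relevant quotient maps, and the proof of Proposition \ref{closed under mult by -a/b} never used anything else, I expect the write-up to be short — essentially ``the proof of Proposition \ref{closed under mult by -a/b} goes through verbatim with $\mathbb{Z}$ in place of $\mathbb{F}_q[t]$ and $c$ in place of $c$.'' I would present it that way, spelling out only the orbit-counting step since that is where the invertibility of $-a/b$ is used.
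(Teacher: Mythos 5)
Your overall plan is exactly the paper's: the paper's entire proof of this proposition is the remark that the argument for Proposition \ref{closed under mult by -a/b} is virtually identical over $\mathbb{Z}$, and your normalization steps (clearing denominators and content via homogeneity and balancedness, and treating $\gcd(b,c)>1$ as a degenerate case) are consistent with that.

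However, the one place where you go beyond a verbatim translation --- the parenthetical ``cleaner route'' --- is false as stated. Multiplication by the unit $u=-a/b$ does permute $\mathbb{Z}/c\mathbb{Z}$, but the cycle through a non-unit residue can be strictly shorter than $\mathrm{ord}(u)$: the orbit of $\overline{x}$ has length equal to the order of $u$ modulo $c/\gcd(c,x)$, which divides $\mathrm{ord}_c(u)$ and can be a proper divisor. For example, with $c=9$ and $u=2$ one has $\mathrm{ord}_9(2)=6$, while the orbit of $\overline{3}$ is $\{\overline{3},\overline{6}\}$, of size $2$. So the subtlety you correctly flagged (that $c \nmid t_{1k}$ does not make $t_{1k}$ a unit mod $c$) is not repaired by your fix; as written, your argument only yields the stated lower bound when some first coordinate can be chosen coprime to $c$ --- for instance when $c$ is prime, which is the only case the paper actually uses (in the corollary bounding $L_3^{\mathbb{Q}}(D)$, and with $c$ irreducible in Corollary \ref{ex:BigBalancedSetOfTuples}). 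In fairness, the paper's own proof of Proposition \ref{closed under mult by -a/b} elides exactly the same point (it selects $t_{1k}$ with $c \nmid t_{1k}$ and then says ``the result follows''), so your proposal matches the paper, gap included; but your write-up replaces a silent gap with an explicitly incorrect justification. Either restrict the argument to the case $\gcd(t_{1k},c)=1$ (and note that coprimality of the first column only guarantees this when $c$ is prime), or supply an additional argument for composite $c$.
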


The proof of this proposition is virtually identical to the proof of Proposition $\ref{closed under mult by -a/b}$. We also obtain analogous corollaries.

\begin{cor}
Let $(a,b,c) \in \mathbb{Z}^3$ be a coprime triple. Suppose that $c$ is prime and that $-a/b$ is a generator for $(\mathbb{Z}/c\mathbb{Z})^*$. Then the smallest balanced multiset of triples with respect to $(a,b,c)$ (if one exists) has size at least $|(\mathbb{Z}/c\mathbb{Z})^*| = c-1$.
\end{cor}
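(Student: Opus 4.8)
The plan is to deduce this corollary directly from the two ingredients already in hand: Proposition \ref{ex:BigBalancedSetOfTuplesQ} for the lower bound and Proposition \ref{prop:SmythTupleNumberOfSolutions}'s analogue over $\mathbb{Q}$ — or more honestly, since that proposition is stated only over $\mathbb{F}_q(t)$, the existence half is not actually needed here because the corollary only claims a lower bound "(if one exists)". So the entire content of the corollary is: given the hypotheses on $(a,b,c)$, the order of $-a/b$ in $(\mathbb{Z}/c\mathbb{Z})^*$ equals $c-1$, and then one invokes Proposition \ref{ex:BigBalancedSetOfTuplesQ}.

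First I would observe that since $c$ is prime, $(\mathbb{Z}/c\mathbb{Z})^*$ is a cyclic group of order $c-1$. By hypothesis $-a/b$ is a generator of this group, so its multiplicative order is exactly $c-1$. (Implicit here is that $-a/b$ makes sense mod $c$, i.e. that $b$ is invertible mod $c$; this follows from coprimality of the triple together with $c$ being prime — if $c \mid b$ then $c \nmid a$, but "$-a/b$ generates $(\mathbb{Z}/c\mathbb{Z})^*$" already presupposes $b$ is a unit mod $c$, so this is built into the hypothesis.) Then Proposition \ref{ex:BigBalancedSetOfTuplesQ} says any balanced multiset of triples with respect to $(a,b,c)$ has size at least the order of $-a/b$, which we have just computed to be $c-1 = |(\mathbb{Z}/c\mathbb{Z})^*|$. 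That is exactly the claimed bound.

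Here is the proof:

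\begin{proof}
Since $c$ is prime, the group $(\mathbb{Z}/c\mathbb{Z})^*$ is cyclic of order $c-1$. By hypothesis, $-a/b$ is a well-defined generator of $(\mathbb{Z}/c\mathbb{Z})^*$, so its multiplicative order is exactly $c-1 = |(\mathbb{Z}/c\mathbb{Z})^*|$. The claim now follows immediately from Proposition \ref{ex:BigBalancedSetOfTuplesQ}.
\end{proof}

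There is essentially no obstacle here; the only point requiring a moment of care is confirming that the hypotheses guarantee $-a/b$ is a unit mod $c$ (so that "order of $-a/b$" is meaningful), but this is subsumed in the assumption that it generates the unit group. One could alternatively spell out that coprimality of $(a,b,c)$ plus primality of $c$ forces $c \nmid b$ once we know $-a/b$ is a generator, making the reduction fully self-contained. The substance of the corollary is entirely carried by Proposition \ref{ex:BigBalancedSetOfTuplesQ}, with this statement just being the clean specialization to the case that the cyclic group $(\mathbb{Z}/c\mathbb{Z})^*$ is hit by a single element.
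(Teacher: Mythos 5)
Your proposal is correct and takes essentially the same route as the paper, which treats this statement as an immediate consequence of Proposition \ref{ex:BigBalancedSetOfTuplesQ} (the $\mathbb{Q}$-analogue of Proposition \ref{closed under mult by -a/b}) together with the observation that a generator of the cyclic group $(\mathbb{Z}/c\mathbb{Z})^*$ has order $c-1$. Your remark that no existence argument is needed because the bound is stated conditionally ``(if one exists)'' is also consistent with the paper's treatment.
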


\begin{cor}
Let $p_D$ denote the largest prime at most $e^D$. Assuming Conjecture \ref{Smyth-Conj}, we have $L^\mathbb{Q}_3(D)\geq p_D-1$ for $D \geq 5$, and consequently, $L_3^\mathbb{Q}(D) = \Omega(e^D)$.
\end{cor}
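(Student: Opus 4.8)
The plan is to derive this corollary directly from the preceding corollary together with the classical Bertrand-postulate-type bound on the gap between consecutive primes. First I would fix $D \geq 5$ and set $c := p_D$, the largest prime at most $e^D$. The key point is that $c$ is then large enough relative to $e^D$ that one can fit a coprime triple $(a,b,c)$ of height exactly $D$ satisfying the hypotheses of the previous corollary. Concretely, I would pick $b$ to be a prime distinct from $c$ with $\log b \leq D$ (for instance $b=2$, or any small prime), let $g$ be a multiplicative generator of $(\mathbb{Z}/c\mathbb{Z})^*$, and let $a$ be the integer of smallest absolute value with $a \equiv -g b \pmod c$; then $-a/b \equiv g$ is a generator of $(\mathbb{Z}/c\mathbb{Z})^*$ and $a,b,c$ are pairwise coprime. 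The height of $(a,b,c)$ is $\max(\log|a|, \log b, \log c)$; since $|a| < c \leq e^D$ and $\log c \leq D$, the height is at most $D$, and I would need to check it is exactly $D$ (or close enough), which is where the prime-gap estimate enters: by Bertrand's postulate $p_D > e^D/2$, so $\log c > D - \log 2$, and with a little care in the choice of $b$ and $a$ one arranges height exactly $D$, or one simply works with height $\lfloor D \rfloor$ or absorbs the $O(1)$ discrepancy. Then by Lemma \ref{balanced-implies-criteria} and Theorem \ref{smyth-conj-Fqt}'s analogue over $\mathbb{Q}$ — here is where Conjecture \ref{Smyth-Conj} is invoked — the triple $(a,b,c)$ is a Smyth triple since its maximum height is achieved (we may also take a second coordinate of comparable size if needed to satisfy criterion (1)).

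Next I would combine two facts: first, by the previous corollary, any balanced multiset with respect to $(a,b,c)$ has size at least $c - 1 = p_D - 1$; second, by Proposition \ref{forward} (whose proof, being representation-theoretic, carries over verbatim from $\mathbb{F}_q(t)$ to $\mathbb{Q}$, or one cites Smyth's original argument), any linear Galois relation with coefficients $(a,b,c)$ of degree $m$ yields a balanced multiset of size $m$. Therefore every linear Galois relation with coefficients $(a,b,c)$ has degree at least $p_D - 1$. Since $(a,b,c)$ has height at most $D$, this shows $L_3^{\mathbb{Q}}(D) \geq p_D - 1$ by the definition of $L_3^{\mathbb{Q}}(D)$ as a minimum over tuples of height $D$ — one subtlety I would flag is that Definition \ref{L_n(D)} fixes height exactly $D$, so I must ensure my triple genuinely has height $D$; if the construction only gives height $D - O(1)$ I would adjust $D$ by an additive constant, which does not affect the asymptotic conclusion. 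Finally, for the last clause, Bertrand's postulate gives $p_D \geq e^D / 2$ for $D \geq 5$ (indeed $e^D \geq e^5 > 100$, so there is a prime in $(e^D/2, e^D]$), hence $L_3^{\mathbb{Q}}(D) \geq p_D - 1 \geq e^D/2 - 1 = \Omega(e^D)$.

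I expect the main obstacle to be purely bookkeeping: making the height of the constructed triple come out to be \emph{exactly} $D$ rather than merely $\le D$, since $D$ is an arbitrary real and $\log c$, $\log|a|$ are logarithms of integers and thus cannot hit $D$ on the nose in general. The cleanest fix is to observe that it suffices to prove the bound for $D$ ranging over $\{\log p : p \text{ prime}\}$ and interpolate, or to note that Definition \ref{L_n(D)} should be read with height $\le D$ (equivalently, $L_3^{\mathbb{Q}}$ is nondecreasing, so a lower bound at height $\log p_D \le D$ transfers up), after which the estimate $p_D - 1 = \Omega(e^D)$ is immediate from Bertrand. No genuinely hard mathematics is involved; the content is entirely in transporting Proposition \ref{forward} and the previous corollary to $\mathbb{Q}$ and invoking Smyth's Conjecture to guarantee that such large-height Smyth triples exist at all.
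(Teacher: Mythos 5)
Your overall architecture matches the paper's (construct a Smyth triple $(a,b,p_D)$ with $-a/b$ a generator of $(\mathbb{Z}/p_D\mathbb{Z})^*$, apply the $\mathbb{Q}$-analogue of Proposition \ref{closed under mult by -a/b} to bound balanced multisets from below, transport Proposition \ref{forward} to $\mathbb{Q}$ to convert degrees into balanced-multiset sizes, and finish with a prime-gap estimate), and your handling of the ``height exactly $D$ versus at most $D$'' bookkeeping is fine and if anything more careful than the paper. But there is a genuine gap in the construction of the triple itself. You take $c = p_D$, $b$ a \emph{small} prime (e.g.\ $b=2$), and $a$ the smallest-absolute-value representative of $-gb \bmod p_D$, and then assert the triple is a Smyth triple ``since its maximum height is achieved.'' That is the nonarchimedean criterion from the $\mathbb{F}_q(t)$ setting; over $\mathbb{Q}$ the absolute value criteria also include the archimedean triangle inequality $|c| \leq |a| + |b|$, and your triple violates it: $|a| \leq p_D/2$ and $|b|$ is bounded, so $|a|+|b| < p_D$ for large $D$. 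By Lemma \ref{balanced-implies-criteria} such a triple admits \emph{no} linear Galois relation at all, so it is not a Smyth triple, and it cannot witness the lower bound in the intended sense of Definition \ref{L_n(D)} (which, per the discussion in Section \ref{deg-of-gal-conj}, is about the worst \emph{Smyth} tuple -- otherwise the corollary would be vacuous and would not need Conjecture \ref{Smyth-Conj} at all).

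Your parenthetical ``we may also take a second coordinate of comparable size if needed to satisfy criterion (1)'' is exactly the nontrivial step you have not supplied: one must produce $a,b$ both of size comparable to $p_D$ with $|a|+|b| \geq p_D$, $\gcd(a,b)=1$, and $-a/b \equiv g \pmod{p_D}$ simultaneously. The paper resolves this with a specific trick: pick a generator $g \not\equiv -1$, let $n$ be the representative of $-1/(g+1) \bmod p_D$, so that $-\tfrac{n+1}{n} \equiv g$; then $(n, n+1, p_D)$ consists of pairwise coprime integers (consecutive integers plus a prime exceeding both), and either it already satisfies the triangle inequality (when $2n+1 \geq p_D$) or the reflected triple $(p_D-(n+1),\, p_D-n,\, p_D)$ does, with $-a/b$ still congruent to $g$. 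Without this (or some equivalent construction), your argument does not produce a Smyth triple of the required shape, so the chain ``generator $\Rightarrow$ balanced multisets of size $\geq p_D-1$ $\Rightarrow$ degree $\geq p_D-1$'' has nothing to apply to. The rest of your proposal (Proposition \ref{forward} over $\mathbb{Q}$, Bertrand/prime number theorem for $\Omega(e^D)$) is sound.
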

\begin{proof}
Similar to Corollary \ref{L3(D)-bound}, we just have to construct a fitting Smyth triple. Assuming the Smyth Conjecture, for $(a,b,c) \in \bbZ^3$ to be a Smyth triple is the same as for $a,b,c$ to be pairwise coprime and satisfy the triangle inequalities ($a+b\geq c$, $a+c\geq b$, $b+c\geq a$).

Assume that $D \geq 5$ and let $p_D$ be the largest prime at most $e^D$ and let $g \in (\bbZ/p_D\bbZ)^*$ be any multiplicative generator other than $-1 \mod p_D$. 


Take $n$ to be the representative of $\frac{-1}{g+1} \mod p_D$ between $0$ and $p_D-1$. Then, $-\frac{n+1}{n}\equiv g \mod p_D$ and if $n + (n+1) \geq p_D$, then $(n, n+1, p_D)$ satisfy the absolute value criteria over $\mathbb{Q}$. (Since we are assuming Conjecture \ref{Smyth-Conj}, this is equivalent to being a Smyth triple.) On the other hand, if $n + (n+1) < p_D$, then $(p_D-(n+1), p_D-n, p_D)$ satisfies the absolute value criteria. In either case, the inequality now follows from Proposition \ref{ex:BigBalancedSetOfTuplesQ}.


By the prime number theorem, for any $k < 1$, we have $p_N>kN$ for large enough $N$. Therefore $p_D -1 = \Omega(e^D)$.

\end{proof}

\section{Smyth's Conjecture Over Number Fields} \label{num-fields}

Recall that in any field, the absolute value criteria are necessary conditions for being a Smyth tuple (Lemma \ref{balanced-implies-criteria}). We showed in Theorem \ref{smyth-conj-Fqt} that these criteria are sufficient for being a Smyth tuple over $\mathbb{F}_q(t)$, and Smyth conjectured the same over $\mathbb{Q}$ (Conjecture \ref{Smyth-Conj}). 

However, an example presented by David Speyer in a MathOverflow post shows that the absolute value criteria are not sufficient for being a Smyth tuple in a general number field \cite{MO264035}. In particular, the triple $(1,1,\frac{1+\sqrt{-15}}{2})$ satisfies the absolute value criteria, but is not a Smyth triple. Note that this triple achieves equality in the archimedean absolute value inequalities. 

Speyer showed in the same post that for triples of the form $(1,1,a_3)$, if one amends the absolute value criteria to be strict inequalities for the archimedean absolute values, then they become a sufficient condition for being a Smyth triple.
On the other hand, examples such as $(2,3,-5)$ show that we cannot simply amend the archimedean absolute value criteria to be strict inequalities, as $(2,3,-5)$ trivially \textit{is} a Smyth triple. Instead, if some analogue of Smyth's Conjecture is true in number fields, it must be a little more sensitive to the cases in which there is equality in one of the archimedean absolute value criteria. 

In order to formulate what we think the right conjecture is, we define the \textit{strong absolute value criteria over $K$} as follows.

\begin{itemize}
    \item[(1'')] For any archimedean absolute value $| \cdot |$ of $K$, we have $|a_i| <\sum_{j \neq i} |a_j|$ for all $i$.
    \item[(2'')] For any nonarchimedean absolute value $| \cdot |$ of $K$, we have $|a_i| \leq \max_{j \neq i} |a_j|$ for all $i$.
\end{itemize}

The strong absolute value criteria are obtained from the absolute value criteria by making the archimedean inequalities strict.

We are now ready to formulate our generalization of Conjecture \ref{Smyth-Conj}.

\begin{conj} \label{Smyth-Conj-Num-Field}
Let $K$ be a number field and $\cO_K$ its ring of integers. $(a_1,...,a_n) \in \cO_K^n$ is a Smyth tuple if and only if $(a_1,...,a_n)$ satisfy the strong absolute value criteria over $K$ or there exist roots of unity $\omega_1,..., \omega_n$ in some extension of $K$ such that $\sum_{i=1}^n a_i \omega_i = 0$.
\end{conj}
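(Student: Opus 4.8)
The plan is to prove the biconditional except for one implication, which I would isolate as Conjecture~\ref{reduced-conj}. The statement splits into three implications: (a) if there are roots of unity $\omega_i$ with $\sum_i a_i\omega_i=0$, then $(a_1,\dots,a_n)$ is a Smyth tuple; (b) if $(a_1,\dots,a_n)$ is a Smyth tuple, then it satisfies the strong absolute value criteria or admits such a relation; (c) if $(a_1,\dots,a_n)$ satisfies the strong absolute value criteria, then it is a Smyth tuple. I expect (a) and (b) to be provable outright and (c) to be the genuinely hard part. For (a), given such $\omega_i$, let $m$ be a common multiple of their orders, so that $\omega_i=\zeta^{e_i}$ for a fixed primitive $m$-th root of unity $\zeta\in\overline K$ and suitable integers $e_i$; let $P$ be the $m\times m$ cyclic permutation matrix and $M:=\sum_i a_iP^{e_i}$, a matrix over $\cO_K$. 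Over $\overline K$ the eigenvalues of $P$ are exactly the $m$-th roots of unity, so $M$ has $\sum_i a_i\zeta^{e_i}=\sum_i a_i\omega_i=0$ among its eigenvalues; hence $\det M=0$, and since each $P^{e_i}$ is a permutation matrix and $K$, being Hilbertian, has $S_d$-extensions for all $d$, Proposition~\ref{equivalent-conditions} (implication (3)$\Rightarrow$(1)) shows that $(a_1,\dots,a_n)$ is a Smyth tuple.

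For (b): a Smyth tuple satisfies the ordinary absolute value criteria by Lemma~\ref{balanced-implies-criteria}, and the nonarchimedean parts of the strong and ordinary criteria coincide, so I may assume that (1'') fails, i.e.\ $|a_1|_v=\sum_{i\neq 1}|a_i|_v$ at some archimedean place $v$ after relabelling, and (assigning $\omega_i=1$ to any zero coordinate) that all $a_i\neq 0$; I must then produce a roots-of-unity relation. By Proposition~\ref{equivalent-conditions} there is a balanced multiset $\{(x_{k,1},\dots,x_{k,n})\}_{k=1}^N$ with respect to $(a_1,\dots,a_n)$; fix an embedding $\sigma\colon\overline K\hookrightarrow\bbC$ inducing $v$ on $K$. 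By balancedness $S:=\sum_k|\sigma x_{k,i}|$ is independent of $i$ and positive, so summing the triangle inequalities $|a_1|_v|\sigma x_{k,1}|\le\sum_{i\neq 1}|a_i|_v|\sigma x_{k,i}|$ over $k$ and using the hypothesized equality forces every one of them to be an equality. Hence for each $k$ the nonzero ones among $\sigma(a_i)\sigma x_{k,i}$, $i\neq1$, lie on a common ray through the origin, and so does $-\sigma(a_1)\sigma x_{k,1}$; this first forces $x_{k,1}\neq0$ (otherwise those vectors would sum to zero and hence all vanish, making the whole tuple zero), and then balancedness forces $x_{k,i}\neq0$ for all $k,i$. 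Therefore $r_i:=(\sigma x_{k,i}/|\sigma x_{k,i}|)/(\sigma x_{k,1}/|\sigma x_{k,1}|)$ is a unit-modulus number independent of $k$, with $r_1=1$; putting $w_k:=\sigma x_{k,1}/|\sigma x_{k,1}|$, applying the phase map $z\mapsto z/|z|$ to the balancedness identity $\{\sigma x_{k,i}\}_k=\{\sigma x_{k,1}\}_k$ gives $\{r_iw_k\}_k=\{w_k\}_k$, so multiplication by $r_i$ permutes the finite set underlying $\{w_k\}_k\subset\bbC^\times$, and hence $r_i$ is a root of unity. Finally, choosing $k_0$ with $|\sigma x_{k_0,1}|$ maximal among all the $|\sigma x_{k,i}|$, the equality analysis forces $\sigma x_{k_0,i}=\mu\,r_iw_{k_0}$ with $\mu$ this common maximum, so $0=\sum_i\sigma(a_i)\sigma x_{k_0,i}=\mu\,w_{k_0}\sum_i\sigma(a_i)r_i$, whence $\sum_i\sigma(a_i)r_i=0$. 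Since each $r_i$ lies in $\overline{\bbQ}\subseteq\sigma(\overline K)$, the elements $\omega_i:=\sigma^{-1}(r_i)$ are roots of unity with $\sum_i a_i\omega_i=0$. (When $K=\bbQ$, where $v$ is real, this just recovers the elementary fact that equality produces signs $\epsilon_i\in\{\pm 1\}$ with $\sum_i\epsilon_ia_i=0$, which is why no roots-of-unity clause is needed there.)

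The hard part is (c). One can first reduce to the case $K=\bbQ(a_1,\dots,a_n)$: by Proposition~\ref{equivalent-conditions}(3), being a Smyth tuple depends only on the existence of permutation matrices $X_i$ with $\det(\sum_i a_iX_i)=0$, a condition insensitive to the ambient number field, and the same holds for the absolute value criteria and for the existence of a roots-of-unity relation; so, after folding in (a) and (b), the whole conjecture reduces to the assertion that over $K=\bbQ(a_1,\dots,a_n)$ the strong absolute value criteria imply that $(a_1,\dots,a_n)$ is a Smyth tuple --- essentially Conjecture~\ref{reduced-conj}. I would not expect to push further: restricted to $K=\bbQ$ this is a (slightly weakened) form of Smyth's 1986 conjecture, which is still open, and the essential obstacle is the same one --- from the absence of any local obstruction one must actually construct a balanced multiset, equivalently permutation matrices $X_i$ with $\det(\sum_i a_iX_i)=0$, and the exact uniform count that makes this possible over $\bbF_q(t)$ in Proposition~\ref{prop:SmythTupleNumberOfSolutions} has no evident analogue over a number field.
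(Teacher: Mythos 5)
Your proposal is correct and follows essentially the same route as the paper: you prove exactly the two tractable implications (a roots-of-unity relation forces a Smyth tuple, and a Smyth tuple with equality at an archimedean place forces a roots-of-unity relation, via the same equality-in-the-triangle-inequality and phase-permutation argument as Proposition \ref{archimedean-equality}), and you isolate the remaining implication as Conjecture \ref{reduced-conj}, which is precisely the reduction the paper carries out. The only differences are cosmetic: you certify the roots-of-unity direction with a cyclic permutation matrix and Proposition \ref{equivalent-conditions}(3) rather than Lemma \ref{multiply-by-root-of-unity}, and you absorb the content of Lemma \ref{all-coordinates-1} into a single maximal-modulus tuple argument.
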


\begin{remark} The $K = \mathbb{Q}$ case of Conjecture \ref{Smyth-Conj-Num-Field} is equivalent to Conjecture \ref{Smyth-Conj}.
\end{remark}

We will show in Proposition \ref{archimedean-equality} that Conjecture \ref{Smyth-Conj-Num-Field} correctly deals with the tuples in which equality is achieved in one of the archimedean absolute value criteria. In particular, if $(a_1,...,a_n)$ is a tuple such that equality in one of the archimedean absolute value criteria, then any tuple in a balanced multiset with respect to $(a_1,...,a_n)$ (if one exists) is a scalar multiple of a tuple of roots of unity.

But first we need two lemmas, the first of which shows that the property of being a Smyth tuple is preserved by multiplying the coordinates by (possibly different) roots of unity.

\begin{lemma} \label{multiply-by-root-of-unity}
Let $(a_1,...,a_n) \in \cO_K^n$. If $\omega_1,...,\omega_n$ are roots of unity in some extension of $K$ and $(a_1,..., a_n)$ is a Smyth tuple, then $(\omega_1 a_1,..., \omega_n a_n)$ is a Smyth tuple in $\cO_{K(\omega_1, \dots, \omega_n)}^n$.
\end{lemma}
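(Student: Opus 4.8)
The plan is to leverage the combinatorial characterization of Smyth tuples from Proposition \ref{equivalent-conditions}, namely the equivalence between being a Smyth tuple and admitting a balanced multiset of tuples. So suppose $(a_1,\dots,a_n)$ is a Smyth tuple; by Proposition \ref{equivalent-conditions} there is a balanced multiset of tuples $T = \{(x_{i1},\dots,x_{in})\}_{i=1}^N$ with respect to $(a_1,\dots,a_n)$, i.e.\ each $(x_{i1},\dots,x_{in})$ solves $\sum_{k=1}^n a_k x_k = 0$ and, for each coordinate position $j$, the multiset $\{x_{ij}\}_{i=1}^N$ equals a fixed multiset $M$. The idea is to turn $T$ into a balanced multiset for $(\omega_1 a_1,\dots,\omega_n a_n)$ by dividing the $j$-th coordinate of each tuple by $\omega_j$.

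First I would work over the field $L := K(\omega_1,\dots,\omega_n)$ (equivalently its ring of integers $\cO_L$), so that all the $\omega_j$ and their inverses live in the base ring; note the coordinates $x_{ij}$ already lie in $K \subseteq L$. Define new tuples $(y_{i1},\dots,y_{in})$ by $y_{ij} := \omega_j^{-1} x_{ij}$. Then $\sum_{k=1}^n (\omega_k a_k) y_{ik} = \sum_{k=1}^n \omega_k a_k \omega_k^{-1} x_{ik} = \sum_{k=1}^n a_k x_{ik} = 0$, so each new tuple is a solution to the twisted equation. For the balancedness condition: in coordinate position $j$, the multiset $\{y_{ij}\}_{i=1}^N = \{\omega_j^{-1} x_{ij}\}_{i=1}^N = \omega_j^{-1} M$. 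This is \emph{not} obviously independent of $j$, since $\omega_j^{-1} M$ depends on $j$ unless all $\omega_j$ are equal — so the naive twist does not immediately produce a balanced multiset. This is the main obstacle.

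To get around it, I would enlarge the multiset by a symmetrization trick: replace $T$ by the union, over all $n$-tuples of roots of unity obtained by permuting $(\omega_1^{-1},\dots,\omega_n^{-1})$ among themselves, of the correspondingly twisted copies — or, more cleanly, take the multiset $T' := \bigsqcup_{\zeta} \{(\zeta_1 x_{i1},\dots,\zeta_n x_{in})\}_{i=1}^N$ where $\zeta$ ranges over a suitable finite group of diagonal scalings that (a) preserves each solution set and (b) acts transitively enough on coordinates to wash out the dependence on $j$. A concrete choice: let $W$ be the (finite) group generated by all $\omega_j^{-1}$ inside the roots of unity of $L^\times$, and form $T'$ by twisting the $j$-th coordinate by $\omega_j^{-1} w$ for every $w \in W$ — since multiplying all coordinates of a solution of $\sum (\omega_k a_k) y_k = 0$ by a common scalar $w$ gives another solution, and since in position $j$ the resulting multiset becomes $\bigsqcup_{w \in W} \omega_j^{-1} w\, M = \bigsqcup_{w\in W} w\,M$ (using $\omega_j^{-1} W = W$), the coordinate multisets are now all equal to $\bigsqcup_{w\in W} wM$, independent of $j$. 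Hence $T'$ is a balanced multiset with respect to $(\omega_1 a_1,\dots,\omega_n a_n)$, and it is nonempty since $W$ and $T$ are. By Proposition \ref{equivalent-conditions} applied over $L$ (which is a number field, hence Hilbertian, hence has $S_d$-extensions for all $d$), $(\omega_1 a_1,\dots,\omega_n a_n)$ is a Smyth tuple in $\cO_L^n = \cO_{K(\omega_1,\dots,\omega_n)}^n$, completing the proof. The one point requiring a little care is checking $W$ is finite (it is, being a finitely generated subgroup of the torsion group $\mu(L)$) and that $\omega_j^{-1} W = W$ for each $j$ (immediate since $\omega_j^{-1} \in W$).
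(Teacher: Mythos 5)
Your proof is correct and is essentially the paper's argument: the paper also passes to a balanced multiset and restores balancedness by twisting coordinates and taking a union over a finite group of roots of unity, the only cosmetic difference being that it handles one $\omega_j$ at a time using the cyclic group $\langle\omega_j\rangle$, whereas you symmetrize over the group $W$ generated by all the $\omega_j$ in one step. Both versions invoke Proposition \ref{equivalent-conditions} over $L=K(\omega_1,\dots,\omega_n)$ exactly as you do, so no changes are needed.
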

\begin{proof}
Without loss of generality we may assume that $\omega_2 = ... = \omega_n = 1$, as we can make the following argument about each coordinate in turn. Denote $\omega := \omega_1$ and $L := K(\omega)$. Suppose that $\omega^m = 1$.

Let $\{(x_{i_1},...,x_{i_n}) \in K^n\}_{i=1}^N$ be a balanced multiset with respect to $(a_1,...,a_n)$. Then $\bigcup_{k=0}^{m-1} \{(\omega^{k-1}x_{i_1},\omega^kx_{i_2},...,\omega^k x_{i_n}) \in L^n\}_{i=1}^{N}$ is a balanced multiset with respect to $(\omega a_1,a_2,...,a_n)$.
\end{proof}

\begin{remark} \normalfont
In particular, Lemma \ref{multiply-by-root-of-unity} shows that if there are roots of unity $\omega_1,...,\omega_n$ such that $\sum_{i=1}^n a_i \omega_i =0$, then $(a_1,...,a_n)$ is a Smyth tuple. Linear relations among roots of unity are a well-studied topic, going back at least to the 1960s. There are several results constraining the prevalence of such relations, indicating that such coefficients represent quite a small subset of Smyth tuples. A survey of some of these results is given in \cite{Zan95}. For instance, when $a_1,...,a_n$ are rational, a result of Mann gives an explicit upper bound depending only on $n$ for the order of the roots of unity $\omega_i$ occurring in a minimal relation $\sum_{i=1}^n a_i \omega_i=0$ \cite{Man65}. (Here minimality means that no nonempty proper sub-sum vanishes, and that the equation is normalized so that $\omega_1 = 1$.)
\end{remark}


\begin{lemma} \label{all-coordinates-1}
Let $(a_1,...,a_n) \in \cO_K^n$. Suppose that there exists an archimedean absolute value $|\cdot |_\nu$ of $K$ and some $i$ for which $|a_i|_\nu = \sum_{j \neq i} |a_j|_\nu$. If there exists a balanced multiset with respect to $(a_1,...,a_n)$, then there exists a balanced multiset $\{(y_{i1},...,y_{in}) \in K^n\}_{i=1}^N$ with respect to $(a_1,...,a_n)$ whose coordinates $y_{ij}$ all satisfy $|y_{ij}|_\nu = 1$.
\end{lemma}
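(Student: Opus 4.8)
The plan is to take any balanced multiset with respect to $(a_1,\dots,a_n)$ and rescale it, one ``$\nu$-absolute-value class'' at a time, into one all of whose coordinates have $\nu$-absolute value $1$. The heart of the matter is to show first that the equality hypothesis, combined with balancedness, forces each individual tuple in \emph{any} balanced multiset to be $\nu$-monochromatic: all of its coordinates have the same $\nu$-absolute value.

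First I would set up. Relabel the coordinates so that the equality occurs in the last slot, $|a_n|_\nu=\sum_{j<n}|a_j|_\nu$, and reduce to the case that every $a_i\neq 0$: if some $a_j=0$ then $x_j$ is absent from $\sum a_ix_i=0$, and after solving the problem for the shortened tuple (which still satisfies the equality at the remaining last slot) one recovers a balanced multiset for the full tuple by letting column $j$ duplicate another column; the all-zero case is trivial. Set $w_j:=|a_j|_\nu/|a_n|_\nu$ for $j<n$, so that $w_j>0$ and $\sum_{j<n}w_j=1$.

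Now the key step. Given a balanced multiset $\{(x_{i1},\dots,x_{in})\}_{i=1}^N$, from $a_nx_{in}=-\sum_{j<n}a_jx_{ij}$ and the triangle inequality we get $|x_{in}|_\nu\le\sum_{j<n}w_j|x_{ij}|_\nu$ for each $i$; squaring and applying convexity of $t\mapsto t^2$ with the weights $w_j$ gives $|x_{in}|_\nu^2\le\bigl(\sum_{j<n}w_j|x_{ij}|_\nu\bigr)^2\le\sum_{j<n}w_j|x_{ij}|_\nu^2$. Summing over $i$, and using that the multiset $\{x_{ij}\}_i$—hence the number $Q:=\sum_i|x_{ij}|_\nu^2$—is independent of $j$, the right side collapses to $\sum_{j<n}w_jQ=Q$, which already equals the left side $\sum_i|x_{in}|_\nu^2$. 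Hence every inequality above is an equality for every $i$; equality in the Jensen step, with $t^2$ strictly convex and all $w_j>0$, forces $|x_{i1}|_\nu=\dots=|x_{i,n-1}|_\nu=:\rho_i$, and then $|x_{in}|_\nu=\rho_i$ as well. Since balanced multisets consist of nonzero tuples, each $\rho_i>0$, so every $x_{ij}\in K^*$ and $\rho_i\in|K^*|_\nu$. Partition the multiset into the sub-multisets $T_\rho=\{i:\rho_i=\rho\}$; each $T_\rho$ is balanced because its $j$-th column is obtained from the $j$-th column of the whole multiset by keeping the entries of $\nu$-absolute value $\rho$, an operation independent of $j$. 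As $x\mapsto|x|_\nu$ is a homomorphism $K^*\to\mathbb R_{>0}$ with subgroup image, $\rho^{-1}=|x^{-1}|_\nu$ for any coordinate $x$ of a tuple in $T_\rho$; scaling every tuple of $T_\rho$ by such an $x^{-1}$ preserves the relation $\sum a_iy_i=0$ and preserves balancedness, and sends each coordinate to $\nu$-absolute value $1$. The union of these rescaled pieces over all $\rho$ is the desired balanced multiset (of the same size $N$).

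The main obstacle is precisely the monochromaticity claim: a priori, equality in $|{-}a_nx_{in}|_\nu=|\sum_{j<n}a_jx_{ij}|_\nu\le\sum_{j<n}|a_j|_\nu|x_{ij}|_\nu$ says only that the numbers $a_jx_{ij}$ lie on a common ray, \emph{not} that they have equal magnitude, so a single tuple carries no information about equality of $\nu$-absolute values. It is the global constraint of balancedness, pushed through the sum-of-squares (Jensen) computation, that upgrades this to within-tuple equality of absolute values; once that is in hand, the decomposition into absolute-value classes and the rescaling are routine, the one thing to watch being that the rescaling must be constant on each class or balancedness is destroyed.
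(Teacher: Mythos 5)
Your proof is correct, but it takes a genuinely different route from the paper's. The paper works only at the extremal level: with $M$ the maximum of $|x_{ij}|_\nu$ over all coordinates of the given balanced multiset, it uses the triangle inequality to show that any tuple whose coordinate in the ``equality slot'' has absolute value $M$ is entirely of absolute value $M$, then uses balancedness as a counting statement (each column has the same number of entries of absolute value $M$, with multiplicity) to conclude that the tuples all of whose coordinates have absolute value $M$ form a balanced sub-multiset, which it rescales by a single element of absolute value $M$. You instead prove the stronger structural fact that \emph{every} tuple in the multiset is $\nu$-monochromatic, via the sum-of-squares/Jensen identity: summing $|x_{in}|_\nu^2 \le \bigl(\sum_{j<n} w_j|x_{ij}|_\nu\bigr)^2 \le \sum_{j<n} w_j|x_{ij}|_\nu^2$ over $i$ and using that $\sum_i |x_{ij}|_\nu^2$ is column-independent forces equality termwise, and strict convexity gives within-tuple equality of absolute values; you then partition into absolute-value classes and rescale each class by one of its own coordinates. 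Your argument buys a rescaled balanced multiset of the same size $N$ (the paper keeps only the top level), and your explicit reduction to the case all $a_i \ne 0$ addresses a point the paper passes over silently (its claim that all coordinates of an extremal tuple have absolute value $M$ likewise needs $a_j \ne 0$). The one spot to tighten: when some $a_j = 0$ you invoke the lemma for the shortened tuple, so you should say where its hypothesis (existence of a balanced multiset) comes from --- project the given multiset to the coordinates with $a_i \ne 0$ and discard the rows that become zero; balancedness guarantees that deleting zero rows preserves balancedness and that at least one nonzero row survives, since otherwise every column, hence every tuple, would be zero. With that sentence added, the reduction, and the whole argument, is complete.
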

\begin{proof}
Without loss of generality assume that $|a_1|_\nu = \sum_{j > 1} |a_j|_\nu$.
Let $S=\{(x_{i1},...,x_{in}) \in K^n\}_{i=1}^N$ be a balanced multiset with respect to $(a_1,...,a_n)$. Let $X = \{x_{ij}: 1 \leq i \leq N, 1 \leq j \leq n\}$ be the set of all coordinates appearing in $S$. Write $M = \max_{x \in X} |x|_\nu$. Any reference to ``absolute value'' in this proof refers to $|\cdot|_\nu$. 

We claim that if a tuple in $S$ has a coordinate of absolute value $M$, then all coordinates of that tuple have absolute value $M$. To see this, first suppose that $|x_{i_01}|=M$ for some $i_0$. Along with the assumptions that $\sum_{j=1}^n a_j x_{i_0j} = 0$ and $|a_1|_\nu = \sum_{j > 2} |a_j|_\nu$, this implies that $|x_{i_0j}|_\nu = M$ for all $j=1,...,n$. What we've shown so far is that if the first coordinate in a tuple in $S$ has absolute value $M$, then all coordinates in that tuple do. 

But $S$ is balanced, which means that the multiset of first coordinates is the same as the multiset of $j^{th}$ coordinates for every $j=1,2,...,n$. In particular, each of these multisets has the same number of elements of absolute value $M$, with the same multiplicities. Therefore coordinates of absolute value $M$ can only occur in tuples whose first coordinate has absolute value $M$, and the claim is proved.

Thus the tuples whose coordinates have absolute value $M$ form a balanced sub-multiset of $S$, and dividing all of these coordinates by an element of $K$ of absolute value $M$, we obtain the desired balanced multiset.
\end{proof}

\begin{prop} \label{archimedean-equality} Let $(a_1,...,a_n) \in \cO_K^n$. Suppose that there exists an archimedean absolute value $|\cdot |_\nu$ of $K$ and some $i$ for which $|a_i|_\nu = \sum_{j \neq i} |a_j|_\nu$. Then $(a_1,...,a_n)$ is a Smyth tuple if and only if there exist roots of unity $\omega_1,...,\omega_n$ (not necessarily in $K$) such that $\sum_{i=1}^n a_i \omega_i = 0$.
\end{prop}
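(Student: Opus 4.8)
The plan is to prove both directions, using the two preceding lemmas to handle the forward direction and Lemma \ref{multiply-by-root-of-unity} to handle the reverse.

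\textbf{Reverse direction.} This is immediate: if $\sum_{i=1}^n a_i \omega_i = 0$ with each $\omega_i$ a root of unity, then by the remark following Lemma \ref{multiply-by-root-of-unity} (or directly, by applying that lemma to the trivially balanced singleton multiset $\{(\omega_1,\dots,\omega_n)\}$ with respect to $(1,\dots,1)$ — wait, more carefully, one applies Lemma \ref{multiply-by-root-of-unity} with the relation rewritten appropriately), $(a_1,\dots,a_n)$ is a Smyth tuple. Actually the cleanest phrasing: $\{(\omega_1,\ldots,\omega_n)\}$ together with its images under multiplying through by a common root of unity forms a balanced multiset with respect to $(a_1,\ldots,a_n)$, so $(a_1,\ldots,a_n)$ is a Smyth tuple by Proposition \ref{equivalent-conditions}.

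\textbf{Forward direction.} Suppose $(a_1,\dots,a_n)$ is a Smyth tuple, so by Proposition \ref{equivalent-conditions} there is a balanced multiset with respect to it. Apply Lemma \ref{all-coordinates-1}: since equality holds in the archimedean criterion at $\nu$, we may replace it with a balanced multiset $\{(y_{i1},\dots,y_{in})\}_{i=1}^N$ all of whose coordinates have $\nu$-absolute value exactly $1$. Now fix any single tuple $(y_{i1},\dots,y_{in})$ from this multiset; it satisfies $\sum_{j=1}^n a_j y_{ij} = 0$ with $|y_{ij}|_\nu = 1$ for all $j$. The goal is to conclude that the $y_{ij}$ are roots of unity. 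This is where I expect the real work to be: having $\nu$-absolute value $1$ at one archimedean place is far from forcing an algebraic number to be a root of unity (Kronecker's theorem requires \emph{all} archimedean absolute values, i.e.\ all conjugates, to have absolute value $1$). So the argument must exploit more — presumably that the equality $|a_i|_\nu = \sum_{j\neq i}|a_j|_\nu$ persists after applying Galois automorphisms, and that one can run Lemma \ref{all-coordinates-1} simultaneously at \emph{every} archimedean place, or alternatively pass to a balanced multiset over a suitable extension and use that the full Galois orbit of each $y_{ij}$ must also consist of coordinates of a balanced multiset with the same absolute value constraints at every archimedean place. The intended path is likely: the coordinates $y_{ij}$ lie in some number field $L$; for \emph{each} archimedean place $w$ of $L$, equality persists in the corresponding inequality (this needs checking — it should follow because the inequality $|a_i|_w \le \sum_{j\ne i}|a_j|_w$ with equality is a statement that survives since the $a_i$ are fixed and the archimedean places of $L$ restrict to that of $\bbQ$/$K$), so Lemma \ref{all-coordinates-1} (or its proof, applied place by place) forces $|y_{ij}|_w = 1$ at every archimedean $w$; then Kronecker's theorem gives that each $y_{ij}$ is a root of unity, and the relation $\sum_j a_j y_{ij} = 0$ is the desired one.

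\textbf{Main obstacle.} The crux is upgrading ``$\nu$-absolute value $1$'' to ``all archimedean absolute values equal $1$.'' I would handle it by noting that the hypothesis ``equality at one archimedean place'' must be shown to propagate to all archimedean places of $K$ — or, if it genuinely only holds at one place, one must instead argue that a balanced multiset can be normalized at every archimedean place at once by iterating the argument of Lemma \ref{all-coordinates-1}: at places where the inequality is strict, there is no constraint, but then one needs a separate argument that strict inequality still permits choosing coordinates of absolute value $1$ there, which seems false in general. So I suspect the honest statement is that equality at $\nu$ forces equality at \emph{every} archimedean place (because, e.g., scaling $(a_1,\dots,a_n)$ doesn't change the ratios, and the archimedean places of a number field are related by Galois and complex conjugation in a way that, combined with the integrality/coprimality, pins down the absolute values) — establishing this propagation is the step I would dwell on, after which Lemma \ref{all-coordinates-1} applied at all archimedean places plus Kronecker finishes the proof.
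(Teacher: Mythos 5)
Your reverse direction is fine and agrees with the paper (the paper notes that $(\omega_1a_1,\dots,\omega_na_n)$ is a Smyth tuple since its entries sum to zero and then applies Lemma \ref{multiply-by-root-of-unity}; your balanced-multiset-of-root-of-unity-multiples phrasing also works). The forward direction, however, has a genuine gap, and you have correctly identified its location but not filled it: after invoking Lemma \ref{all-coordinates-1} you aim to show that the coordinates $y_{ij}$ of the balanced multiset are themselves roots of unity, via propagating the equality to all archimedean places and applying Kronecker. That intermediate statement is both stronger than what the proposition asserts and not reachable by your route. The proposition only claims the existence of roots of unity $\omega_i$ (not in $K$) with $\sum_i a_i\omega_i=0$; the coordinates of the balanced multiset lie in $K$ and need not be roots of unity at all --- for instance one may rescale any balanced multiset by $\lambda=(3+4i)/5\in\mathbb{Q}(i)$, which has absolute value $1$ at the unique archimedean place but infinite multiplicative order, so even ``absolute value $1$ at every archimedean place'' would not let Kronecker conclude anything (Kronecker also requires algebraic integers, and the $y_{ij}$ are merely elements of $K$). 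Moreover the proposed propagation of the equality from $\nu$ to the other archimedean places is neither proved in your sketch nor needed, and normalizing a balanced multiset at several places simultaneously is not possible in general, since Lemma \ref{all-coordinates-1} selects a sub-multiset and a scaling that depend on the place.

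The paper's argument stays entirely inside the single complex embedding $\phi\colon K\hookrightarrow\mathbb{C}$ attached to $\nu$ and proves the root-of-unity statement about the \emph{directions of the coefficients}, not about the coordinates. Writing $\phi(a_j)=r_j\theta_j$ with $|\theta_j|=1$, the equality $|\phi(a_1)|=\sum_{j>1}|\phi(a_j)|$ together with $|x_{ij}|_\nu=1$ forces the summands of $\sum_j a_jx_{ij}=0$ to be aligned, giving $\phi(x_{i_0j})=-\tfrac{\theta_1}{\theta_j}\,\phi(x_{i_01})$ for every tuple. Balancedness then supplies an index $i_1$ with $x_{i_11}=x_{i_0j}$, and iterating shows that all powers $\left(-\tfrac{\theta_1}{\theta_j}\right)^m\phi(x_{i_01})$ lie in the finite set of $\phi$-images of coordinates; by pigeonhole $-\tfrac{\theta_1}{\theta_j}$ is a root of unity (the same iteration trick as in Proposition \ref{closed under mult by -a/b}). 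Setting $\omega_1=1$ and $\omega_j=-\tfrac{\theta_1}{\theta_j}$, dividing the relation for the tuple $i_0$ by $x_{i_01}$ and pulling back through an embedding $\psi\colon\overline{\mathbb{Q}}\hookrightarrow\mathbb{C}$ extending $\phi$ yields roots of unity $\rho_i$ with $\sum_i a_i\rho_i=0$. This mechanism --- converting balancedness plus the single-place equality into a finiteness/pigeonhole argument about the ratios $-\theta_1/\theta_j$ --- is the missing idea in your proposal.
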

\begin{proof}
($\Leftarrow$): By assumption, $(\omega_1 a_1,..., \omega_n a_n)$ is a Smyth tuple. The result now follows from Lemma \ref{multiply-by-root-of-unity}.

($\Rightarrow$): Let $\phi: K \hookrightarrow \mathbb{C}$ be an embedding corresponding to the archimedean absolute value $|\cdot|_\nu$ and let $\psi: \overline{\mathbb{Q}} \hookrightarrow \mathbb{C}$ be an embedding of the algebraic closure of $\mathbb{Q}$ which extends $\phi$. We will write $|\cdot|$ for the standard absolute value of complex numbers. Without loss of generality assume that $|\phi(a_1)| = \sum_{j > 1} |\phi(a_j)|$.

By Lemma \ref{all-coordinates-1}, there exists a balanced multiset $S=\{(x_{i1},...,x_{in}) \in K^n\}_{i=1}^N$ with respect to $(a_1,...,a_n)$ such that all $|x_{ij}|_\nu = 1$. By definition of balanced multiset, we have
\begin{equation} \label{tuple-equations}
    \sum_{j=1}^n a_j x_{ij} = 0
\end{equation}

Now (\ref{tuple-equations}) along with $|\phi(a_1)| = \sum_{j > 1} |\phi(a_j)|$ and the assumption that $|\phi(x_{ij})| = 1$ implies that

\begin{equation} \label{arg-equation}
    \arg \phi(a_j x_{ij}) = \pi + \arg \phi(a_1 x_{i1}) (\text {mod }2\pi), \text{ for all } i,j \text{ with } j >1.
\end{equation}

In words, (\ref{arg-equation}) is saying that given a fixed $i$, the $\phi(a_j x_{ij})$ all ``point in the same direction'' for $j > 1$, and $\phi(a_1 x_{i1})$ ``points in the opposite direction.''

The rest of the argument is most easily expressed in polar coordinates. For all $j$, let $\phi(a_j) = r_j \theta_j$ where $r_j \in \mathbb{R}^{\geq 0}$ and $|\theta_j|=1$. Fix any $i_0 \in \{1,2,..,N\}$ and any $j \in \{2,...,n\}$. Then by (\ref{arg-equation}) and the fact that all $|\phi(x_{ij})|=1$, we have $\phi(x_{{i_0}j}) = -\frac{\theta_1}{\theta_j}\phi(x_{i_01})$. 


By balancedness, there is some $i_1$ so that $x_{i_11} = x_{i_0j}$, so repeating the above argument, we get $\phi(x_{i_1j}) = -\frac{\theta_1}{\theta_j}\phi(x_{i_11}) = -\frac{\theta_1}{\theta_j}\phi(x_{i_0j}) = (-\frac{\theta_1}{\theta_j})^2 \phi(x_{i_01})$. Iterating, this argument shows that $(-\frac{\theta_1}{\theta_j})^m \phi(x_{i_01}) \in \{\phi(x): x \in X\}$ for all $m \in \mathbb{Z}$, implying that $-\frac{\theta_1}{\theta_j}$ is a root of unity.

Now let $\omega_1 =1$ and $\omega_j = -\frac{\theta_1}{\theta_j}$ for $j >1$. Dividing the equation (\ref{tuple-equations}) with $i=i_0$ by $x_{i_01}$ and applying $\phi$ to both sides, we have $\sum_{i=1}^n \phi(a_i) \omega_i = 0$. Finally, letting $\rho_i = \psi^{-1}(\omega_i)$, we see that $\psi(\sum_{i=1}^n a_i \rho_i) = \sum_{i=1}^n \phi(a_i) \omega_i = 0$, and hence $\sum_{i=1}^n a_i \rho_i = 0$.
\end{proof}

The above work, along with Lemma \ref{balanced-implies-criteria}, reduces Conjecture \ref{Smyth-Conj-Num-Field} to the following.

\begin{conj} \label{reduced-conj} Let $K$ be a number field and $\cO_K$ its ring of integers. If $(a_1,...,a_n) \in \cO_K^n$ satisfies the strong absolute value criteria, then $(a_1,...,a_n)$ is a Smyth tuple.
\end{conj}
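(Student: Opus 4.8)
Since Conjecture \ref{reduced-conj} contains, in the case $K=\mathbb{Q}$, essentially the still-open Conjecture \ref{Smyth-Conj}, what follows is a program for attacking it rather than a proof. The overall strategy mirrors the $\mathbb{F}_q(t)$ argument: instead of building Galois conjugates directly, use Proposition \ref{equivalent-conditions} and try to produce, for every $(a_1,\dots,a_n)\in\cO_K^n$ satisfying the strong absolute value criteria, a balanced multiset of tuples with respect to $(a_1,\dots,a_n)$ (equivalently, permutation matrices $X_1,\dots,X_n$ with $\det(\sum_i a_i X_i)=0$).

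The first step is to observe that the non-archimedean content is handled exactly as in the proof of Proposition \ref{prop:SmythTupleNumberOfSolutions}. Condition (2'') says the maximum is attained at least twice at every finite place; combined with coprimality and $n\geq 3$ this means that for every index $j$ the $a_i$ with $i\neq j$ generate the unit ideal, so for any nonzero ideal $\mathfrak{m}\subset\cO_K$ the number of solutions of $\sum_i a_i x_i\equiv 0\pmod{\mathfrak{m}}$ with one coordinate pinned is $\#(\cO_K/\mathfrak{m})^{\,n-2}$, independent of which coordinate is pinned and of its value. The reason Theorem \ref{smyth-conj-Fqt} does not transfer is structural: in $\mathbb{F}_q[t]$ the box $V_N$ is simultaneously a full set of residues modulo $a_n$ \emph{and} an additive subgroup of the field itself, whereas $\cO_K$ has no finite subring, so there is no number-field object to play the role of $V_N$. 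Thus the difficulty is concentrated entirely at the archimedean places, and something new is needed there.

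For the archimedean places I would try two approaches. The first is a counting argument of circle-method type: fix a large symmetric body $\Omega\subset\cO_K\otimes_{\mathbb{Z}}\mathbb{R}\cong\mathbb{R}^{r_1}\times\mathbb{C}^{r_2}$ and count solutions of $\sum_i a_i x_i=0$ with all $x_i\in\Omega\cap\cO_K$; the strict inequalities (1'') are precisely what makes the archimedean singular integrals positive, so solutions are abundant, but the count of solutions with $x_j$ fixed at a value $c$ is controlled by an $(n-2)$-dimensional slice-volume of $\Omega^{n-1}$ that genuinely depends on $j$ and $c$, so the raw solution set is \emph{not} balanced. The hope is to either tune the shape of $\Omega$ so that all these slice-volumes coincide, or show the discrepancies are of lower order as $\Omega$ grows and then correct the multiset by adjoining finitely many explicit ``gadgets'' (for instance $\pm1$-relations, each contributing a balanced multiset of size $2$) to equalize the coordinate multiplicities. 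The second approach is a direct combinatorial one exploiting the slack in (1''): reduce, if possible, to triples --- by repeatedly splitting a tuple satisfying the strong criteria into two overlapping shorter tuples that still satisfy them, organized along a tree, and gluing the resulting balanced multisets along the shared coordinate (an edge-splitting argument in the spirit of Mader's theorem) --- and then build balanced multisets for strict-triangle triples $(a,b,c)\in\cO_K^3$ by hand, bootstrapping from Speyer's treatment of the family $(1,1,c)$ in \cite{MO264035} together with the reductions already available (scaling by units and Lemma \ref{multiply-by-root-of-unity}).

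The main obstacle is that this archimedean construction is exactly the open heart of Smyth's conjecture: there is no known uniform way to produce the required conjugates, or balanced multiset, for a general strict-triangle triple already over $\mathbb{Q}$, and the number-field setting makes matters strictly harder because several archimedean places must be accommodated simultaneously, which obstructs any naive reduction of the number-field case to the rational one. So while the non-archimedean bookkeeping and the reduction machinery (Lemmas \ref{multiply-by-root-of-unity} and \ref{all-coordinates-1} and Proposition \ref{archimedean-equality}) are in hand, the decisive input --- a precisely shaped counting body whose coordinate slices all have equal volume, or a calculus of combinatorial gadgets that realizes any strict-triangle triple --- is where the real work, and a genuinely new idea, will be required.
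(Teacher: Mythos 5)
The statement you were asked about is Conjecture \ref{reduced-conj}: the paper itself offers no proof of it, and indeed it cannot be proved by the paper's methods, since its $K=\mathbb{Q}$ case is exactly Smyth's still-open Conjecture \ref{Smyth-Conj}. Your submission correctly recognizes this and is a research program rather than a proof, so there is nothing to certify as correct or to fault as a hidden gap beyond the one you name yourself: the archimedean construction is the open heart of the problem. Your diagnosis matches the paper's framing. The non-archimedean bookkeeping does go through as you say (condition (2'') plus coprimality makes the congruence count with one coordinate pinned equal to $\#(\cO_K/\mathfrak{m})^{n-2}$, independent of the pinned coordinate), and your observation about why Theorem \ref{smyth-conj-Fqt} does not transfer --- that $V_N$ is simultaneously a complete residue system modulo $a_n$ and an additive group, with no analogue in $\cO_K$ --- is precisely the structural point that makes the function-field case tractable and the number-field case not.

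For comparison with what the paper actually does: it reduces Conjecture \ref{Smyth-Conj-Num-Field} to Conjecture \ref{reduced-conj} via Lemmas \ref{multiply-by-root-of-unity} and \ref{all-coordinates-1} and Proposition \ref{archimedean-equality}, and then proves only the special case $(1,1,\dots,1,\alpha)$ with every archimedean absolute value of $\alpha$ strictly less than $n-1$, by adapting Speyer's argument: embed $\bbZ[\alpha]$ as a lattice in $\cO_K\otimes_\bbZ\bbR$, use a covering-radius estimate on a ball $B_R$ to build a nonnegative integer matrix with row sums $n-1$ having $\alpha$ as an eigenvalue, and finish with Perron--Frobenius and a decomposition into permutation matrices. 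This is a concrete instance of the second branch of your program (bootstrapping from Speyer's treatment of $(1,1,c)$), and nothing beyond it is currently known; your first branch (a circle-method count over a body $\Omega$ whose coordinate slices have equal volume, or correcting the discrepancy with small balanced gadgets) is a reasonable direction but, as you acknowledge, supplying it would already settle Smyth's conjecture over $\bbQ$, so it is exactly where a genuinely new idea is required.
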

Speyer \cite{MO264035} gives a proof of Conjecture \ref{reduced-conj} in the case where $n=3$ and $a_1=a_2$.

Speyer's argument works for general $n$ and $a_1 = ... = a_{n-1}$ with minimal modification; this result is our final proposition.

\begin{prop} Let $n \geq 3$ be an integer. Let $K$ be a number field and $\cO_K$ its ring of integers.
Let $\alpha \in \cO_K$ so that every archimedean absolute value of $\alpha$ is less than $n-1$. Then $(1,1,\dots, 1, \alpha) \in \cO_K^n$ is a Smyth tuple.
\end{prop}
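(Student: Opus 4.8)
The plan is to reduce the statement to constructing a suitable balanced multiset (equivalently, by Proposition \ref{equivalent-conditions}, a linear Galois relation) with respect to the tuple $(1,1,\dots,1,\alpha)$, where the hypothesis is that every archimedean absolute value $|\alpha|_\nu$ satisfies $|\alpha|_\nu < n-1$. First I would observe that this is exactly the strong absolute value criteria in this special case: the nonarchimedean condition (2'') holds automatically because $n-1$ of the coefficients are $1$, so for any nonarchimedean $|\cdot|$ one has $\max_{j\neq i}|a_j| \geq 1 \geq |a_i|$ for every $i$ (using that $\alpha \in \cO_K$, so $|\alpha| \leq 1$); and the archimedean condition (1'') for the index $i=n$ is precisely $|\alpha|_\nu < (n-1)\cdot 1$, while for $i<n$ it reads $1 < (n-2) + |\alpha|_\nu$, which is automatic once $n\geq 3$ unless $|\alpha|_\nu$ is very small, and even then $1 \leq \sum_{j\neq i}|a_j|_\nu$ forces a check — so a clean way is to note the relation $\sum a_i \omega_i = 0$ route is unavailable in general and we genuinely need Speyer's construction.

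The heart of the matter is Speyer's construction, which I would adapt from \cite{MO264035}. The idea there (for $n=3$, $a_1=a_2=1$) is to write $\gamma$ and a conjugate as points on a circle and use that $|\alpha| < 2$ means the relation $\gamma_1 + \gamma_2 = \alpha\gamma_3$ can be realized geometrically; more precisely one builds a balanced multiset out of roots of a carefully chosen polynomial, typically by taking $\gamma_i$ to be of the form $\zeta^{k_i}\beta$ for roots of unity $\zeta$ together with a transcendental or generic element, arranging the sum of $n-1$ unit-modulus-scaled conjugates to equal $\alpha$ times another. I would carry this out by: (i) choosing a large $N$ and an $N$-th root of unity $\zeta$; (ii) exhibiting an explicit finite set of $n$-tuples whose coordinates, as a multiset, are $\zeta$-invariant in each slot (so the multiset is automatically balanced), and which satisfy $x_1 + \dots + x_{n-1} + \alpha x_n = 0$; (iii) verifying the balance condition combinatorially — this is where one uses that translating by $\zeta$ permutes the constructed tuples cyclically. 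The condition $|\alpha|_\nu < n-1$ at each archimedean place is what guarantees the geometric configuration (a closed polygon with $n-1$ unit sides and one side of length $|\alpha|_\nu$, suitably interpreted after scaling) actually exists, i.e. that the relevant algebraic numbers are not forced to degenerate.

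The main obstacle I anticipate is handling all archimedean places simultaneously. Over $\mathbb{Q}$ there is a single archimedean absolute value, so Speyer's circle picture lives in one copy of $\mathbb{C}$; over a general number field $K$ with $r_1 + r_2$ archimedean places, one needs the constructed elements of $\overline{K}$ to satisfy the geometric constraint under \emph{every} embedding $K \hookrightarrow \mathbb{C}$, and the bound $|\alpha|_\nu < n-1$ is assumed at all of them precisely to make this possible. The resolution should be that Speyer's construction is already ``place-by-place robust'': the combinatorial/balanced structure of the multiset is defined over $K$ (or a cyclotomic extension independent of the embedding), so once the tuple of coefficients satisfies the strict inequality everywhere, the same finite set of tuples works. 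I would therefore phrase step (ii) above entirely algebraically — solutions to $x_1 + \dots + x_{n-1} + \alpha x_n = 0$ with $x_i$ drawn from an explicit cyclotomic-flavored set — and then remark that Lemma \ref{multiply-by-root-of-unity}'s proof technique (cyclically shifting by a root of unity to restore balance) is exactly the mechanism that makes the multiset balanced, so no embedding-dependent choices enter. Finally I would invoke Proposition \ref{equivalent-conditions} to upgrade the balanced multiset to a genuine linear Galois relation over $K$, completing the proof.
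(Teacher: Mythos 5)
There is a genuine gap: the construction you propose in step (ii) is not Speyer's construction, and it cannot work in general. If every coordinate of your tuples has the form $\zeta^{k}\beta$ for a fixed root of unity $\zeta$ and a common $\beta$, then dividing the relation $x_1+\cdots+x_{n-1}+\alpha x_n=0$ by $\beta\zeta^{k_n}$ forces $\alpha$ to be (up to a root-of-unity factor) a sum of $n-1$ roots of unity, hence to lie in a cyclotomic, in particular abelian, extension of $\mathbb{Q}$. But the hypothesis only bounds the archimedean absolute values: for instance a root $\alpha$ of $x^3-x-1$ has all conjugates of absolute value less than $2$, yet $\mathbb{Q}(\alpha)$ is a non-Galois cubic inside an $S_3$-extension, so $\alpha$ is not of that form; with $n=3$, $K=\mathbb{Q}(\alpha)$, your construction has nothing to produce. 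This is no accident: unit-modulus (``closed polygon'') solutions are exactly the rigidity exploited in Proposition \ref{archimedean-equality} for the \emph{equality} case, and under the strict inequality one should not expect, and generally cannot have, balanced multisets of that cyclotomic shape. Relatedly, your claim that $\zeta$-invariance in each slot makes the multiset ``automatically balanced'' is unsubstantiated; Lemma \ref{multiply-by-root-of-unity} only transports an already existing balanced multiset, it does not create one.

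The paper's (Speyer's) argument is quite different and is the missing idea. After using Lemma \ref{multiply-by-root-of-unity} to reduce to $(1,\dots,1,-\alpha)$ and Proposition \ref{equivalent-conditions} to reduce to finding permutation matrices with $\det\bigl(\sum_{i=1}^{n-1}X_i-\alpha I\bigr)=0$, one first builds a nonnegative integer matrix $C$ with all row sums $n-1$ having $\alpha$ as an eigenvalue: take the lattice $A=\mathbb{Z}[\alpha]$ inside $V=A\otimes_{\mathbb{Z}}\mathbb{R}$ with the norm $\sum_\nu |x|_\nu^2$ over archimedean places, and show that for $R$ large (using $c=\max_\nu|\alpha|_\nu<n-1$ via the estimate $\tfrac{c}{n-1}R+(n-2)M<R$) every $z\in A\cap B_R$ can be written as $\alpha z=(n-2)z_1+z_2$ with $z_1,z_2\in A\cap B_R$; the points of $A\cap B_R$ then serve as an eigenvector for $C$ with eigenvalue $\alpha$. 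A Perron--Frobenius argument then converts $C$ into a sum of $n-1$ permutation matrices still having $\alpha$ as an eigenvalue. Note that this also answers the ``all places simultaneously'' worry you raise: the bound at every archimedean place enters through the single contraction estimate in Minkowski space, not through any place-by-place geometric configuration. Without this lattice/eigenvalue step (or a genuine substitute), your proposal does not prove the proposition.
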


\begin{proof}
By Lemma \ref{multiply-by-root-of-unity}, it suffices to show $(1,1,\dots, 1, -\alpha)$ is a Smyth tuple. By Proposition \ref{equivalent-conditions}, it suffices for us to show that there are permutation matrices $X_i$ so that $\sum_{i=1}^{n-1}X_i$ has $\alpha$ as an eigenvalue.

We will follow the argument from \cite{MO264035}, starting with a slight generalization of Speyer's Step 1, which we write out in full for the sake of clarity.

Step 1: There is a nonnegative integer matrix $C$ whose rows sum to $n-1$ with eigenvalue $\alpha$.

Consider the lattice $A=\bbZ[\alpha]$ and the vector space $V=A \otimes_\bbZ \bbR$. Since $\alpha$ is an algebraic integer, $A$ is a discrete full sublattice of $V$. We take the norm $\sum_{\nu} |x|_{\nu}^2$, where the sum runs over all archimedean places. Let $c = \max_{\nu} |\alpha|_{\nu}$. By hypothesis, $c<n-1$. Denote by $B_R$ the closed ball of radius $R$ around $0$. 

Let $M$ be large enough so that any ball of radius $M$ around any point in $V$ contains a point in $A$. Take $R$ large enough so that $\frac{c}{n-1}R+(n-2)M<R$. Now, for any $z \in A \cap B_R$, let $z_1 \in A \cap B_R$ be the closest point to $\frac{\alpha z}{n-1}$. Let $z_2=\alpha z - (n-2)z_1$. Now, $$|z_1| \leq |z_1-\frac{\alpha z}{n-1}| + |\frac{\alpha z}{n-1}| \leq M + \frac{c}{n-1}R < R$$ Similarly, $$|z_2|=|\alpha z - (n-2)z_1| \leq |\alpha z - \frac{n-2}{n-1} \alpha z| + (n-2)|\frac{\alpha z}{n-1}-z_1| \leq \frac{c}{n-1}R+(n-2)M < R$$ Thus, for any $z \in A \cap B_R$, we can find $z_1,z_2 \in A \cap B_R$ so that $(n-2)z_1+z_2=z$. Enumerate the elements of $A \cap B_R$ as $z_1, z_2, \dots, z_l$. Then, we can form an $l \times l$ matrix $C$ with the following entries. For the $i$-th row, consider $z_i$. As before, we may write $(n-2)z_j+z_k=z_i$ for some $1 \leq j,k \leq l$. In the $i$-th row, put $n-2$ in the $j$-th column and $1$ in the $k$-th column if $j \neq k$; if $j=k$, put an $n-1$ in the $j^{th}$ column. This matrix $C$ has all row sums $n-1$. By construction, it has $\alpha$ as an eigenvalue with right eigenvector $(z_1,z_2, \dots, z_l)^T$.

The rest of Speyer's argument can now be applied with virtually no modification; using the Perron-Frobenius theorem, one obtains a matrix $D$ from $C$ which is the sum of $n-1$ permutation matrices and still has $\alpha$ as an eigenvalue.
\end{proof}

\printbibliography

\end{document}